\theoremstyle{plain}
\newtheorem{thm}{\protect\theoremname}
\theoremstyle{definition}
\newtheorem{defn}[thm]{\protect\definitionname}
\theoremstyle{remark}
\newtheorem{rem}[thm]{\protect\remarkname}
\theoremstyle{plain}
\newtheorem{lem}[thm]{\protect\lemmaname}
\theoremstyle{definition}
\newtheorem{example}[thm]{\protect\examplename}
\newcommand{\algo}[1]{\textsf{#1}}
\newcommand{\predicate}[1]{\textsf{#1}}
\newcommand{\R}{\mathbb{R}}
\providecommand{\definitionname}{Definition}
\providecommand{\examplename}{Example}
\providecommand{\lemmaname}{Lemma}
\providecommand{\remarkname}{Remark}
\providecommand{\theoremname}{Theorem}
\title{Fast Floating-Point Filters for Robust Predicates}
\author{Tinko Bartels\thanks{Technical University of Berlin, Straße des 17. Juni 135, 10623 Berlin, Germany, t.bartels@tu-berlin.de} 
        \and Vissarion Fisikopoulos\thanks{Oracle, Greece,  vissarion.fysikopoulos@oracle.com}
        \and Martin Weiser\thanks{Zuse Institute Berlin, Takustr. 9, 14195 Berlin, Germany, weiser@zib.de}}
\date{\today}
\begin{document}

\maketitle

\abstract{
Geometric predicates are at the core of many algorithms, such as the construction of Delaunay triangulations, mesh processing and spatial relation tests.
These algorithms have applications in scientific computing, geographic information systems and computer-aided design.
With floating-point arithmetic, these geometric predicates can incur round-off errors that may lead to incorrect results and inconsistencies, causing computations to fail. 
This issue has been addressed using a combination of exact arithmetic for robustness and floating-point filters to mitigate the computational cost of exact computations.
The implementation of exact computations and floating-point filters can be a difficult task, and code generation tools have been proposed to address this. 
We present a new C++ meta-programming framework for the generation of fast, robust predicates for arbitrary geometric predicates based on polynomial expressions. 
We combine and extend different approaches to filtering, branch reduction, and overflow avoidance that have previously been proposed.
We show examples of how this approach produces correct results for data sets that could lead to incorrect predicate results with naive implementations. 
Our benchmark results demonstrate that our implementation surpasses state-of-the-art implementations.
}

\section{Introduction}

Basic geometric predicates, such as computing the orientation of a triangle or testing if a point is inside a circle, are at the core of many computational geometry algorithms such as convex hull, Delaunay triangulation and mesh generation~\cite{Berg08}. 
Interestingly, those predicates also appear in geospatial computations such as topological spatial relations that determine the relationship among geometries. 
Those operations are fundamental in many Geographic Information System (GIS) applications. 
If evaluated with floating-point arithmetic, these computations can incur round-off errors that can, due to the ill-conditioning of discrete decisions, lead to incorrect results and inconsistencies, causing computations to fail~\cite{Kettner2004}. 

Among other applications, Delaunay triangulations are important for the construction of triangular meshes~\cite{Jamin15,Shewchuk98} and Triangulated Irregular Networks (TIN)~\cite{Li2004}. 
Predicate failures in the underlying Delaunay triangulation may lead to suboptimal mesh quality and cause invalid triangulations or termination failure~\cite{Shewchuk1997}. 

Robust geometric predicates can also be used in spatial predicates to guarantee correct results for floating-point geometries. 
Spatial predicates are used to determine the relationship between geometries and have applications in spatial databases and GIS applications. 
Examples of such predicates include \predicate{intersects}, \predicate{crosses}, \predicate{touches}, or \predicate{within}. 
Using non-robust spatial predicates, for example, a point that lies close to the shared edge of two triangles can be found to be within both or neither of them, which is not only incorrect but also inconsistent and violates basic assumptions on partitioned spaces. 

Exact computations can guarantee correct results for floating-point input but are very slow for practical purposes. Since predicates are usually ill-conditioned only on a set of measure zero and extremely well-conditioned everywhere else, an adaptive evaluation can improve average performance by using exact arithmetic only if an a priori error estimate can not guarantee correctness for the faster, approximate computations. 
In other words, the expensive computations are filtered out by using those error estimates. 

Now, the main question is how difficult it is to compute those error estimates. 
There are several approaches that provide a trade-off in efficiency and accuracy of error estimation.
The three main types of filters are static, semi-static and dynamic. 

In the first case, the error is pre-computed very efficiently using a priori bounds on the input and typically attains very low accuracy. 
In semi-static filters, the error estimation depends on the input. 
They are somewhat slower than static filters but improve on the accuracy and require no a priori bounds on the input. 
The slowest and most accurate are dynamic filters using floating-point interval arithmetic to better control the error and achieve fewer filter failures.

\paragraph{Previous work.} 
Many techniques have been proposed in the past for efficient and robust arithmetic. 
In his seminal paper~\cite{Shewchuk1997}, Shewchuk introduced robust, adaptive implementations for \predicate{orientation}-, \predicate{incircle}- and \predicate{insphere}-predicates that can be used, for example, in the construction of Delaunay triangulations.
They use a sequence of semi-static filters of ever-increasing accuracy.
The phases are attempted in order, each phase building on the result from the previous one until the correct sign is obtained.
On the other hand, efficient dynamic filters are proposed in~\cite{Bronnimann}.
For Delaunay triangulations, in~\cite{Devillers03} they propose a set of efficient static and semi-static filters and experimentally compare them with several alternatives including~\cite{Shewchuk1997}.
Meyer and Sylvain develop FPG~\cite{meyer:inria-00344297}, a general-purpose code analyzer and generator for 
static filtered predicates. 
The generated filters, however, include multiple branch instructions, which was found in~\cite{Ozaki2016} to cause suboptimal performance.

Nanevski et al. extend Shewchuk's method to arbitrary polynomial expressions and implement an expression compiler that takes a function and produces a predicate, consisting of semi-static filters and an exact stage that computes the sign of the source function at any given floating point arguments~\cite{Nanevski2003}. Their filters, however, are not robust with respect to overflow or underflow.

In~\cite{burnikel_exact_2001}, Burnikel et al. present EXPCOMP, a C++ wrapper class and an expression compiler that generates fast semi-static filters for predicates involving the operations $+, -, \cdot, /, \sqrt{\cdot}$, which handle all kinds of floating-point exceptions. In their benchmarks, they found a 25-30\% runtime overhead for their C++ wrapper class when compared with their expression compiler.
More recently, Ozaki et al. developed an improved static filter as well as a new semi-static filter for the 2D orientation predicate, where the latter also handles floating-point exceptions such as overflow and underflow~\cite{Ozaki2016}. 

Their filters, however, are not designed for arbitrary polynomial predicates.
Regarding non-linear geometries, there is work on filters for circular arcs~\cite{Devillers00}.
Moreover, robust predicates could be extended to provide robust constructions such as points of intersection of linestrings~\cite{Attene}.
Recently, GPU implementations of robust predicates have been presented, providing a constant (3 to 4 times) speedup over standard CPU implementations~\cite{Meng19,Menezes2021FastPE}.

In~\cite{fisikopoulos16}, they employ dynamic determinant computations to speed up the computation of sequences of determinants that appear in high-dimensional (typically more than 6) geometric algorithms such as convex hull and volume computation. 

In~\cite{bartels21}, the authors present a C++ metaprogramming framework that produces fast, robust predicates and illustrate how GIS applications can benefit from it. 

\paragraph{Our contribution.}
The contribution of this paper is three-fold. 
First, we present an algorithm that generates semi-static or static filters for robust predicates based on arbitrary polynomials. These filters are shown to be valid for all input numbers, regardless of range issues such as overflow or underflow. They also require only a single comparison and can therefore be evaluated encountering only a single, easy-to-predict branch.
To the best of our knowledge, this is the first filter design combining generality, range robustness, and branch efficiency.

Second, we present a new implementation based on C++ meta-programming techniques that produces fast, robust code at compile-time for predicates.
It is extensible, based on the \href{https://github.com/boostorg/geometry}{C++ library Boost.Geometry}~\cite{BG} and publicly available 
\begin{center}
\url{https://doi.org/10.5281/zenodo.6836062}. 
\end{center}
The main advantage of our implementation is the ability to automatically generate filters for arbitrary polynomial predicates without relying on external code generation tools. 
In addition, it can be complemented seamlessly with manual handcrafted filters, as illustrated by the use of our axis-aligned filter for the \predicate{incircle} predicate (see example~\ref{exa:incircle-rect}).

Last, we perform an experimental analysis of the generated filters as well as a comparison with the state of the art. 
We perform benchmarks for 2D Delaunay triangulation, 3D Polygon Mesh processing and 3D Mesh refinement.
The algorithms tested in the benchmarks make use of four different geometric predicates of different complexity.
We show that our predicates outperform the state of the art  libraries~\cite{shewchukimpl,cgal:bfghhkps-lgk23-21a} in all benchmark cases, which includes both synthetic and real data. Unlike Burnikel et al. in~\cite{burnikel_exact_2001} we find no performance penalty for our C++ implementation over generated code.

\section{Robust Geometric Predicates}\label{robust geometric predicates}

In this section we review the basic concepts and notation necessary for presenting our filter design in  Section~\ref{sec:semi-static-filters} and implementation approach in Section~\ref{sec:metaprogramming}. 

\subsection{Geometric Predicates and Robustness Issues}\label{geometic predicates and robustness issues}

In the context of this paper, we define geometric predicates to be functions that return discrete answers to geometric questions based on evaluating the sign of a polynomial. 
One example is the planar \predicate{orientation} predicate. 
Given three points $a,b,c \in \R^2$, it determines the location of $c$ with respect to the straight line going through $a$ and $b$ by evaluating the sign of 
\begin{equation}
    p_{\text{orientation\_2}}\left(a,b\right)\coloneqq\left|
    \begin{array}{cc}
        a_{x}-c_{x} & a_{y}-c_{y}\\
        b_{x}-c_{x} & b_{y}-c_{y}
    \end{array}\right|\label{eq:2d_orientation_expression_real}
\end{equation}

For this definition of the \predicate{orientation} predicate, positive, zero, and negative determinants correspond to the locations left of the line, on the line and right of the line, respectively. 
This geometric predicate has applications in the construction of Delaunay triangulations, convex hulls, and in spatial predicates such as \predicate{within} for 2D points, lines or polygons.

While expression (\ref{eq:2d_orientation_expression_real}) always gives the correct answer in real arithmetic, this is not necessarily the case for floating-point arithmetic.

\begin{defn}[Floating-Point Number System] \label{def:binary-fpn}
For a given precision $p \in \mathbb{N}_{\geq 2}$ and minimum and maximum exponents $e_{\min}, e_{\max}\in\mathbb{Z}$ we define by 
\[
N_{p,e_{\min},e_{\max}} \coloneqq \{ \left(-1\right)^\sigma \left( 1 + \sum_{i=1}^{p-1} b_i 2^{-i} \right) 2^e \mid \sigma , b_1,\ldots,b_p \in \{ 0, 1 \}, e_{\min} \leq e \leq e_{\max}  \}
\]
the set of \emph{normalised binary floating-point numbers} and by 
\[
S_{p,e_{\min}} \coloneqq \{ \left(-1\right)^\sigma \left( \sum_{i=1}^{p-1} b_i 2^{-i} \right) 2^{e_{\min}} \mid \sigma , b_1,\ldots,b_p \in \{ 0, 1 \} \}
\]
the set of \emph{subnormal binary floating-point numbers}. 

For the remainder we will drop the parameters in the subscript. 
A \emph{binary Floating-Point Number system} (FPN) is defined by $F \coloneqq N \cup S \cup \{ -\infty,\infty,\text{NaN} \}$. For a number $a \in F$ given in the representation 
\[
\left(-1\right)^\sigma \left( 1 + \sum_{i=1}^{p-1} b_i 2^{-i} \right) 2^e
\text{ or }
\left(-1\right)^\sigma \left( \sum_{i=1}^{p-1} b_i 2^{-i} \right) 2^{e_{\min}},
\]
we call the tuple $\left(b_1, \ldots, b_{p-1}\right)$ \emph{significand}. It is sometimes called \emph{mantissa} in literature. 
The significand is called \emph{even} if $b_{p-1} = 0$.

\begin{defn}[Rounding function]
For a given FPN $F$ we define the \emph{rounding-function} $\text{rd}:\mathbb{R}\rightarrow F$ as follows

\[
\text{rd}\left(a\right)\coloneqq\begin{cases}
-\infty & a\leq-\left(2^{e_{\max}}-2^{e_{\max}-p}\right)\\
\text{closest number to \ensuremath{a} in \ensuremath{F}} & -\left(2^{e_{\max}}-2^{e_{\max}-p}\right)<a<2^{e_{\max}}-2^{e_{\max}-p}\\
\infty & a\geq2^{e_{\max}}-2^{e_{\max}-p}\\
\end{cases}
\]

If there are two nearest numbers in $F$, the one with an even significand is chosen. 
\end{defn}\label{def:floating-point-ops}

 Next, we define some special quantities. By $\varepsilon\coloneqq2^{-p}$ we denote the \emph{machine epsilon}, which is half of the difference between $1.0$ and the next number in $F$,
 by $u_{N}\coloneqq2^{-e_{\min}}$ the smallest, positive, normalized
 number in $F$ and by $u_{S}\coloneqq2^{-e_{\min}-p+1}=2\cdot\varepsilon\cdot u_{N}$
 the smallest, positive, subnormal number in $F$.

\begin{defn}[Floating-point operations]
For a given FPN $F$ and $a, b \in F \cap \mathbb {R}$ we define the floating-point operator 
$\circledcirc : F\times F \rightarrow F$ for each $\circ \in \{+,-,\cdot \}$ 
such that 
$ \left| a \circledcirc b - a \circ b \right| \leq \varepsilon \left|a \circ b\right| $ for $\circ \in \{+,- \}$ and 
$ \left| a \odot b - a \cdot b \right| \leq \varepsilon \left|a \cdot b\right| + \frac{1}{2} u_{\rm{S}} $ 
if the result using the corresponding operator on $\mathbb{R}$ would be rounded to a number in $ F \cap \mathbb{R}$. 
Otherwise, the floating-point operators produce the corresponding signed infinity, which is called an \emph{overflow}. 
If a zero is produced in the floating-point multiplication of two non-zero numbers or a subnormal number is produced, this is called an \emph{underflow}. 
If no underflow occurs in floating-point multiplication, we require $ \left| a \odot b - a \cdot b \right| \leq \varepsilon \left|a \cdot b\right|$. 
These definitions are extended to arguments with NaN by setting the result to NaN and to infinities in the natural way with the following special cases set to NaN: $\infty \oplus -\infty$, $-\infty \ominus -\infty$, $\infty \ominus \infty$, $\pm \infty \odot 0$.
\end{defn}

This model of floating-point operations is given as the standard model with gradual underflow in chapter two of~\cite{Higham2002} and~\cite{Rump2011}.

\end{defn}

Common examples include the binary FPN with $p = 24, e_{\min} = -126, e_{\max} = 127$ called \emph{single-precision} or \emph{FP32} and the binary FPN with $p = 53, e_{\min} = -1022, e_{\max} = 1023$ \emph{double-precision} or \emph{FP64}.

\begin{rem}
The requirements of the previous definition are met by IEEE 754-conformant binary floating-point number systems which include the native single- and double-precision
floating-point types of the architectures x86, x86-64, current ARM, common
virtual machines running WebAssembly and current CUDA processors.
The machine epsilon is sometimes defined as the
difference between $1.0$ and the next number in $F$.
\end{rem}

We call 
\begin{align}
    \tilde{p}_{\text{orientation\_2}}\left(a,b,c\right)\coloneqq& \left(a_{x}\ominus c_{x}\right)\odot\left(b_{y}\ominus c_{y}\right) \ominus\nonumber\\&\left(a_{y}\ominus c_{y}\right)\odot\left(b_{x}\ominus c_{x}\right)\label{eq:2d_orientation_expression_fpn}
\end{align}
a floating-point realisation of (\ref{eq:2d_orientation_expression_real}). 
Due to rounding errors, this realisation can produce incorrect results.

As an example, consider the points $a=\left(-0.01, -0.59\right)$, $b=\left(0.01, 0.57\right)$, $c=\left(0,-0.01\right)$. 
In real arithmetic, $c$ lies on the straight line through $a$ and $b$. 
Their closest approximations in $F_{53,-1022,1023}$ (IEEE 754-2008 binary64~\cite{IEEE754}, or FP64 for short), $\tilde{a}, \tilde{b}, \tilde{c}$, however, are only very close to collinear, which makes the case sensitive to rounding errors. 

As a second example, let us evaluate the spatial relationship between the point $c$ and the closed triangles $\tilde{t}_1 \coloneqq \{\left( -1, 0 \right) , \tilde{a}, \tilde{b} \}$ and $\tilde{t}_2 \coloneqq \{ \left( 1, 0 \right) , \tilde{b}, \tilde{a} \}$ using the winding-number algorithm~\cite{Sunday21}.
\begin{table}\centering
\begin{tabular}{ lccc  }
 \toprule
    Architecture & $\tilde{c}$ and $\tilde{t}_1$ & $c$ and $\tilde{t}_2$ & $c$ and $\tilde{t}_1 \cup \tilde{t}_2$ \\
 \midrule
    -march=haswell & outside & outside & inside \\
    -march=ivybridge & touches & touches & inside\\
    exact & inside & outside & inside\\
 \bottomrule
\end{tabular}
\caption{Relationships of point $c=\left(0,-0.01\right)$ to polygon $\tilde{t}_1 \coloneqq \{\left( -1, 0 \right), \tilde{a}, \tilde{b} \}$ and $\tilde{t}_2 \coloneqq \{ \left( 1, 0 \right) , \tilde{b}, \tilde{a} \}$, where $a=\left(-0.01, -0.59\right)$, $b=\left(0.01, 0.57\right)$.\label{tbl:point-polygon-rel}}
\end{table}

Table~\ref{tbl:point-polygon-rel} summarizes the results, all compiled with GCC 11.1 and optimization level O2. 
The first row is particularly noteworthy because the results are not only incorrect but also mutually contradictory. 
The final row can be obtained using any implementation of the \predicate{orientation} predicate that guarantees correct results, such as the implementation of Shewchuk~\cite{shewchukimpl} or CGAL's kernels \algo{epick} or \algo{epeck}~\cite{cgal:bfghhkps-lgk23-21a}. 
\begin{rem}
    The difference between the architectures is due to GCC producing an assembly code using the Fused Multiply-Add (FMA) instruction for evaluating~\eqref{eq:2d_orientation_expression_real}.
    FMA can be defined as 
    \[
    \text{FMA}\left(a, b, c\right) \coloneqq \text{rd}\left(a \cdot b + c\right).
    \]
    This instruction causes loss of anticommutativity for difference, i.e. $a \odot b\ominus c \odot d = - (c \odot d\ominus a \odot b)$ holds if no range errors occur, but $\text{FMA}\left(a,b,-c \odot d\right) = -\text{FMA}\left(c,d,-a \odot b\right)$ is not necessarily true. 
    When inserted into the \predicate{orientation} predicate, this can lead to situations in which swapping two input points does not reverse the sign of the result. 
\end{rem}
Inconsistencies can occur without FMA as well. 
Consider $\tilde{a}, \tilde{b}, \tilde{d} \coloneqq \left( \text{rd}\left(0.15\right), \text{rd}\left(8.69 \right) \right)$ and $\tilde{e} \coloneqq \left( \text{rd}\left(0.07\right), \text{rd}\left(4.05 \right) \right)$.
The floating-point realisation (\ref{eq:2d_orientation_expression_fpn}), compiled without FMA-optimizations, will determine $\tilde{a}, \tilde{b}, \tilde{e}$ and $\tilde{b}, \tilde{d}, \tilde{e}$ to be collinear but not $\tilde{a}, \tilde{b}, \tilde{d}$, which is a contradiction. 

Besides rounding errors, incorrect predicate results can also be caused by overflow or underflow. It can be easily checked that, in the FP64 number system,
\[
\tilde{p}_{\text{orientation\_2}}\left(\left(2^{-801}, 2^{-801}\right),\left(2^{-800}, 2^{-800}\right),\left(2^{-801}, 2^{-800}\right)\right) = 0,
\]
due to underflow, and 
\[
\tilde{p}_{\text{orientation\_2}}\left(\left(2^{800}, 2^{800}\right),\left(2^{800}, 2^{800}\right),\left(0, 0\right)\right) = \text{NaN},
\]
due to overflow.

Different approaches have been developed to obtain consistent results. We briefly discuss arbitrary precision arithmetic and floating-point filters in the following sections.

\subsection{Exact Arithmetic}\label{sec:exact arithmetic}

A natural idea to solve the precision issues of floating-point arithmetic would be to perform the computations at higher precision.
There are a number of arbitrary-precision libraries that implement number types with increased precision in software, such as GMP~\cite{Granlund12}, the CGAL Number Types package~\cite{cgal:hhkps-nt-22a} or Boost Multiprecision~\cite{BM}.

In combination with filters, such arbitrary-precision number types are used for exact geometric predicates in the CGAL 2D and 3D kernels, which were documented in \cite{cgal:bfghhkps-lgk23-21a}.
A drawback of software-implemented number types is that basic operations can be orders of magnitude slower than hardware-implemented operations for native number types such as single- or double-precision floating-point operations on most modern processor architectures.

An approach for arbitrary-precision arithmetic that makes use of hardware acceleration is expansion arithmetic.
A floating-point expansion is a tuple of multiple floating-point numbers that can represent a single number as an unevaluated sum with greater precision than a single floating-point number.
Because the operations on floating-point expansions are implemented in terms of hardware-accelerated operations on the components, they can be faster than more general techniques for arbitrary precision arithmetic.
The use of floating-point expansions for exact geometric predicates has been described in~\cite{Shewchuk1997}.

\subsection{Floating-Point Filters}\label{sec:floating-point filters}

We call an implementation a robust floating-point predicate if it is guaranteed to produce correct results. 
With expansion arithmetic, we can produce a robust predicate from a floating-point realisation by replacing all rounding floating-point operators $\oplus,\ominus$ and $\odot$ with the respective exact algorithms on floating-point expansions. 
The sign of the resulting expansion is then equal to the sign of its most significant (i.e. largest non-zero) component. 

The issue with this naive approach is that even simple predicates become computationally expensive. 
To mitigate this issue, we resort to expansion arithmetic only in the rare case that the straightforward floating point implementation is not guaranteed to produce the correct result. This decision is made by filters. 

\begin{defn}[Filter]
For a predicate $\text{sign}\left(p\left(x_{1},\ldots,x_{n}\right)\right)$
and an FPN system $F$, we call $f:M\subseteq F^{n}\rightarrow\left\{ -1,0,1,\text{uncertain}\right\} $
a \emph{floating-point filter}. $f$ is called valid for $p$ on $M$ if
for each $\left(x_{1},\ldots,x_{n}\right)\in M$ either $f\left(x_{1},\ldots x_{n}\right)=\text{sign}\left(p\left(x_{1},\ldots,x_{n}\right)\right)$
or $f\left(x_{1},\ldots,x_{n}\right)=\text{uncertain}$ holds. The
latter case is referred to as \emph{filter failure}.
\end{defn}

Adopting the terminology used in~\cite{Devillers03}, we call filters \emph{dynamic} if they require the computation of an error at every step of the computation, \emph{static} if they use a global error bound that does not depend on the inputs for each call of the predicate, and \emph{semi-static} if their error bound has a static component and a component that depends on the input. 
A variation of static filters, which require a priori restrictions on the inputs to compute global error bounds, are \emph{almost static} filters, which start with an error bound based on initial bounds on the input and update their error bound whenever the inputs exceed the previous bounds.

\begin{example}[Shewchuk's Stage A \predicate{orientation} predicate] \label{exa:2d-orientation-shewchuk-stage-a-filter} 
Consider the predicate $\mathrm{sign}(p)$ based on \eqref{eq:2d_orientation_expression_real} and its floating-point realisation $\mathrm{sign}(\tilde p)$~\eqref{eq:2d_orientation_expression_fpn}.
Then, 
    \begin{align*}
        f\left(a_{x},\ldots,c_{y}\right)\coloneqq\begin{cases}
            \text{sign}\left(\tilde{p}\right), & \text{if } \left|\tilde{p}\right|\geq e\left(a_{x},\ldots,c_{y}\right) \\
            \text{uncertain}, & \text{otherwise}
    \end{cases}
    \end{align*}
    with the error bound 
    \begin{align*}
        e\left(a_{x},\ldots,c_{y}\right) \coloneqq & \left(3\epsilon+16\epsilon^{2}\right)\odot(\left|\left(a_{x}\ominus c_{x}\right)\odot\left(b_{y}\ominus c_{y}\right)\right|\oplus\\
        & \left|\left(a_{y}\ominus c_{y}\right)\odot\left(b_{x}\ominus c_{x}\right)\right|),
    \end{align*}
    where $\tilde{p}\coloneqq\tilde{p}\left(a_{x},\ldots,c_{y}\right)$ and $\epsilon$ is the machine-epsilon of the FPN, is a valid filter for all inputs that do not cause underflow~\cite{Shewchuk1997}.
    
    If underflow occurs, however, validity is not guaranteed. Consider
the example 
\begin{align*}
a\coloneqq & \left(\begin{array}{c}
0\\
0
\end{array}\right)\\
b\coloneqq & \left(\begin{array}{c}
2^{e_{\min}}\\
0
\end{array}\right)\\
c\coloneqq & \left(\begin{array}{c}
2^{e_{\min}}\\
2^{e_{\min}}
\end{array}\right).
\end{align*}
Clearly the points are not collinear, however, $e\left(a_{x},\ldots,c_{y}\right)$
and $\tilde{p}$ will evaluate as zero due to underflow, which shows
that the filter can certify incorrect signs.
\end{example}
This filter can be considered semi-static, with its static component being $3\epsilon+16\epsilon^{2}$. 
The error bound is obtained mostly by applying standard forward-error analysis to the floating-point realisation. 
Shewchuk also described similar filters for the 2D \predicate{incircle} predicate, as well as the 3D \predicate{orientation} and \predicate{incircle} predicates.

\begin{example}[FPG \predicate{orientation} filter~\cite{meyer:inria-00344297}]\label{exa:2d-orientation-fpg-filter} Consider predicate (\ref{eq:2d_orientation_expression_real}) and its floating-point realisation (\ref{eq:2d_orientation_expression_fpn}).
    Let $m_{x} \coloneqq \max\{\left|a_{x}\ominus c_{x}\right|, \left|b_{x} \ominus c_{x} \right| \}$ and $m_{y} \coloneqq \max\{\left|a_{y}\ominus c_{y}\right|, \left|b_{y} \ominus c_{y} \right| \}$. 
    If 
    \[
    \max{\{m_{x}, m_{y}\}} > 2^{509},
    \] 
    \[
    0 \neq \min{\{m_{x}, m_{y}\}} \leq 2^{-485}
    \] 
    or \[\left|\tilde{p}\right| \leq \num{8.88720573725927e-16} \odot m_{x} \odot m_{y} \neq 0,\] 
    then "uncertain" is returned, otherwise the sign of $\tilde{p}$ is returned.
    The filter is valid with FP64 arithmetic for all FP64 inputs. It is also semi-static with the static component of the error bound being $\num{8.88720573725927e-16}$ (roughly $4\cdot\varepsilon$).

\end{example}

A static version of this filter can be obtained if global bounds for $m_x$ and $m_y$ are known a priori. 
The first two conditions are range-checks that guard against overflow and underflow. 
Apart from these conditions, the filter is based on an error bound similar to the previous example. 
The program FPG can generate such filters for arbitrary homogeneous polynomials if group annotations for the input variables are provided. 
In this context, group annotations are
lists of grouped variables that are part of the input for FPG.
The group annotations help the code generator with the choice of the scaling factors $m_{x}$ and $m_{y}$.
In the example above, the group annotations specified that $a_x, b_x$ and $c_x$ as well as $a_y, b_y$ and $c_y$ form a group. 

Another example of a semi-static error bound filter for the 2D \predicate{orientation} predicate that can handle overflow, underflow, and rounding errors with fewer branches than the filter generated by FPG can be found in~\cite{Ozaki2016}.

The next example is not strictly an error bound filter. 

\begin{example}[Shewshuk's stage B \predicate{orientation} predicate]\label{exa:2d-orientation-stage-b} 
Consider the predicate~\eqref{eq:2d_orientation_expression_real} and its floating-point realisation~\eqref{eq:2d_orientation_expression_fpn}.
    Let $d_{ax} \coloneqq a_{x}\ominus c_{x}, d_{bx} \coloneqq b_{x} \ominus c_{x}$ and analogously for y. 
    If the computations of these values incurred round-off errors, return uncertain. 
    Otherwise compute $d_{ax} \cdot d_{by} - d_{ay} \cdot b_{dx} $ exactly, using expansion arithmetic, and return the sign.
    This filter is described as stage B in \cite{Shewchuk1997} and is valid for all inputs that do not produce overflow or underflow. 
    The full version in \cite{Shewchuk1997} also includes an error bound check that allows preventing a filter failure if the no-round-off test fails.
\end{example}

Similar filters were presented by Shewchuk for other predicates. 
This filter is particularly effective for input points that are closer to each other than to $\left(0, 0\right)$ because differences of floating-point numbers that are within half/double of each other do not incur round-off errors. 
In the context of Shewchuk's multi-staged predicates, this filter also has the advantage that it can reuse computations from stage A and that its interim results can be reused for more precise stages in case of filter failure.
As a final example, we present a dynamic filter.

\begin{example}[Interval arithmetic filter]
    \label{exa:interval-filter} Consider a predicate and one of its floating-point realisations.
    Given the inputs, compute for each floating-point operation $\oplus, \ominus, \odot$ the lower and the upper bound of the result, including the rounding error, using interval arithmetic.
    If the final resulting interval contains numbers of different signs, return uncertain. Otherwise, return the shared sign of all numbers in the result interval.
    This approach is presented in~\cite{Bronnimann}.
\end{example}

In \cite{Devillers03}, \cite{Ozaki2016} and \cite{Shewchuk1997} failure probabilities and performance experiments for various sequences of filters, types of inputs and algorithms are presented. 
We will present our own experiments in Section~\ref{subsec:benchmarks}.

\section{Semi-Static Filters} \label{sec:semi-static-filters}

In this section, we will define a set of rules that allow us to derive error bounds for arbitrary floating-point polynomials. These error bounds will then be used to define semi-static filters. We start with establishing some properties of floating-point operations that will be used in the proof of the validity of our error bounds.

\begin{lem} \label{lem:fpn-error-bounds}
Let $a,b\in F$ be floating-point numbers.
\begin{enumerate}
\item If either $a$ or $b$ is in $\left\{ -\infty,\infty,\mathrm{NaN}\right\} $,
then 
\[
a\circledcirc b\in\left\{ -\infty,\infty,\mathrm{NaN}\right\} 
\]
 and 
\[
\left|a\circledcirc b\right|\in\left\{ \infty,\mathrm{NaN}\right\} 
\]
for every $\circledcirc\in\left\{ \oplus,\ominus,\odot\right\} $.
Consequently, the same holds for all floating-point expressions using
the operators $\oplus,\ominus,\odot$ and $\left|\cdot\right|$ that
contain a subexpression that evaluates to $-\infty,\infty$ or $\mathrm{NaN}$.
\item If an underflow occurs in the computation of $a \oplus b$ or $a \ominus b$, then the result is exact.
\end{enumerate}
\end{lem}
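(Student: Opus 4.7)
My plan is to handle the two parts separately. Both arguments are direct consequences of the definition of floating-point operations given earlier, so I expect neither to require a clever idea, but the case analysis in part (1) needs to be laid out carefully, and in part (2) one needs an algebraic observation about the common granularity of floating-point reals.

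For part (1), I will proceed by a direct case analysis on the operand that lies in $\{-\infty,\infty,\mathrm{NaN}\}$. The definition of $\oplus,\ominus,\odot$ was extended first by setting the result to $\mathrm{NaN}$ whenever any argument is $\mathrm{NaN}$, and then ``in the natural way'' on infinities with the explicit list of exceptions $\infty\oplus(-\infty)$, $(-\infty)\ominus(-\infty)$, $\infty\ominus\infty$, $\pm\infty\odot 0$, which also return $\mathrm{NaN}$. Thus, whenever one operand is $\mathrm{NaN}$, the result is $\mathrm{NaN}$; whenever one operand is $\pm\infty$ and the other is finite, the result is a signed infinity; and whenever both operands are infinities, the result is either a signed infinity (the ``natural'' sign combinations) or $\mathrm{NaN}$ (the four exceptional cases). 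In every case the result lies in $\{-\infty,\infty,\mathrm{NaN}\}$. The statement on $|a\circledcirc b|$ follows because $|{\pm\infty}|=\infty$ and $|\mathrm{NaN}|=\mathrm{NaN}$ (absolute value, being a unary operator built from the sign bit, propagates $\mathrm{NaN}$ and infinities in the same way). The extension to arbitrary expressions is a straightforward structural induction on the expression tree: if a subexpression evaluates to something in $\{-\infty,\infty,\mathrm{NaN}\}$, then by the base case any enclosing application of $\oplus,\ominus,\odot$ or $|\cdot|$ again produces an element of $\{-\infty,\infty,\mathrm{NaN}\}$.

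For part (2), the key observation is that every real floating-point number in $F\cap\mathbb{R}$ is an integer multiple of $u_S=2^{e_{\min}-p+1}$. A subnormal $(-1)^\sigma\bigl(\sum_{i=1}^{p-1}b_i 2^{-i}\bigr)2^{e_{\min}}$ is manifestly of this form, and a normalised number $(-1)^\sigma\bigl(1+\sum_{i=1}^{p-1}b_i 2^{-i}\bigr)2^e$ with $e\ge e_{\min}$ can be written as $M\cdot 2^{e-p+1}=\bigl(M\cdot 2^{e-e_{\min}}\bigr)u_S$ with integer $M$. Consequently, the exact values $a+b$ and $a-b$ are integer multiples of $u_S$ for any $a,b\in F\cap\mathbb{R}$. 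Underflow in $a\oplus b$ or $a\ominus b$ means the exact result has magnitude strictly less than $u_N=2^{p-1}u_S$, so we may write it as $k\cdot u_S$ with $|k|<2^{p-1}$, which is exactly the form of a representable subnormal number (or zero). Since no rounding is needed, the stored result equals $a\pm b$ exactly.

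The only mild obstacle I foresee is bookkeeping: making sure that in part (1) I enumerate all combinations of $\pm\infty$ with $\pm\infty$ for each of $\oplus,\ominus,\odot$ and check them against the list of exceptional NaN cases, and that in part (2) I state the granularity argument precisely enough that the ``$|k|<2^{p-1}$'' bound really falls out of $|a\pm b|<u_N$. Neither step requires any technique beyond unwinding the definitions in Section~\ref{robust geometric predicates}.
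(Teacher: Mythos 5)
Your proposal is correct. The paper, however, does not really give a proof at all: it dismisses part~(1) with ``follows directly from the definition'' and discharges part~(2) entirely by citing Theorem~3.4.1 of Hauser's paper on exception handling. Your treatment of part~(1) is just the spelled-out version of that dismissal (the case split on whether the exceptional operand is $\mathrm{NaN}$, $\pm\infty$ against a finite operand, or $\pm\infty$ against $\pm\infty$, checked against the four listed $\mathrm{NaN}$ exceptions, followed by structural induction), so it is the same approach made explicit. For part~(2) you take a genuinely different and self-contained route: you observe that every element of $F\cap\mathbb{R}$ is an integer multiple of $u_S=2^{e_{\min}-p+1}$ (this is where gradual underflow's uniform grid near zero enters), hence so is the exact sum $a\pm b$; underflow forces $|a\pm b|<u_N=2^{p-1}u_S$, so $a\pm b=k\,u_S$ with $|k|<2^{p-1}$, which is exactly the set of representable subnormals (and zero), so rounding is the identity. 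This is the standard ``exactness of gradual underflow for addition'' argument and it is correct. What the paper's citation buys is brevity and an authoritative reference; what your version buys is a fully self-contained lemma that does not require the reader to chase down Hauser's Theorem~3.4.1, at the modest cost of restating a classical fact.
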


The first statement follows directly from~\ref{def:floating-point-ops} and the second statement is given as Theorem 3.4.1 in~\cite{hauser_handling_1996}.

Let $p:\mathbb{R}^{m}\rightarrow\mathbb{R}$ a polynomial in $m$
variables, denoted as $p\in \mathbb{R}\left[x_1,\ldots,x_m\right]$.
Let $\tilde{p}:F^{m}\rightarrow F$ be a floating-point realisation of $p$, i.e. a function on $F^{m}$ involving only the floating-point operations $\oplus,\ominus$ and $\odot$ such that it would be equivalent to $p$ if the floating-point operations were replaced by the corresponding exact operations.
Note, that $\tilde{p}$ is not unique, e.g. $\left( x_1 \oplus x_2 \right) \oplus x_3$ is different from $x_1 \oplus \left(x_2 \oplus x_3 \right)$ but both are floating-point realisations of the real polynomial $x_1 + x_2 + x_3$.
We denote by $F\left[x_1,\ldots,x_m\right]$ the set of floating-point realisations of polynomials in $m$ variables.
The subexpressions of $\tilde{p}$ will be denoted by $\tilde{p}_{1},\ldots,\tilde{p}_{k}$.

We will present a recursive scheme that allows the derivation of error
bound expressions for semi-static, almost static and static floating-point
filters. We will assume that the final operation of $\tilde{p}$ is
a sum or difference, so it holds that $\tilde{p}=\tilde{p}_{1}\circledcirc\tilde{p}_{2}$
with $\circledcirc\in\left\{ \oplus,\ominus\right\} $. If it were
a multiplication, the signs of each factor could be determined independently.
These filters will require only one branch like the filters in \cite{Ozaki2016}
and will not certify incorrect values for inputs that cause overflow
and optionally for inputs that cause underflow.

\subsection{Error Bounds}

As a reminder, semi-static error bounds are partially computed at compile-time and partially computed from the input values at runtime.
The static component of our error bounds is a polynomial in the machine epsilon $\varepsilon$, so an element of $F\left[ \varepsilon \right]$.
The runtime component of our semi-static error bounds is an expression in input values $x_1,\ldots,x_m$ and constants with the operators $\oplus, \ominus, \odot$ and $\left|\cdot\right|$.
We will call the set of such expression $F'\left[x_1,\ldots,x_m\right]$.
We will define two \emph{error bound maps} $E$ and $E_{\rm UFP}$ for all subexpressions $\tilde{q}$
of $\tilde{p}$ of the form 
\[
E,E_{\text{UFP}} : F\left[x_1,\ldots,x_m\right]\rightarrow F\left[\varepsilon\right]\times F'\left[x_1,\ldots,x_m\right] , \quad\tilde{q}\mapsto\left(a,m\right),
\]
such that the following invariants hold: 

Either 
\begin{equation}
m\left(x_{1},\ldots,x_{m}\right)\in\left\{ \infty,\text{NaN}\right\} \tag{I1} \label{eq:invariant2eq1}
\end{equation}
or both
\begin{equation}
\left|\tilde{q}\left(x_{1},\ldots,x_{m}\right)\right|\leq m\left(x_{1},\ldots,x_{m}\right), \tag{I2.1} \label{eq:invariant2eq2}
\end{equation}
and 
\begin{equation}
\left|\tilde{q}\left(x_{1},\ldots,x_{m}\right)-q\left(x_{1},\ldots,x_{m}\right)\right|\leq a\left(\varepsilon\right)\cdot m\left(x_{1},\ldots,x_{m}\right). \tag{I2.2} \label{eq:invariant2eq3}
\end{equation}

$E_{\text{UFP}}$, where UFP signifies underflow protection, will be constructed such that these invariants hold regardless of whether underflow occurs during any of the computations.
For $E$ this will not be guaranteed.
Because the polynomial $a\left(\cdot\right)$ will only
be evaluated in $\varepsilon$, we will omit the argument and will
use the polynomial and its value in $\varepsilon$ interchangeably.
The error bound maps are defined through a list of recursive \emph{error bound rules},
\[
R_{i\left(, \text{UFP} \right)}: F\left[x_1,\ldots,x_m\right]\rightarrow F\left[\varepsilon\right]\times F'\left[x_1,\ldots,x_m\right]
\]

for $1\leq i \leq 7$ as follows:

\begin{defn}\label{def:error-bound-rules} (Error Bound Rules, Error Bound Map)
Let $\tilde{q}:F^{m}\rightarrow F$ be a subexpression of a floating-point
polynomial $\tilde{p}:F^{m}\rightarrow F$. We define the following
error bound rules:

\begin{enumerate}
\item For a $\tilde{q}$ of the form $\tilde{q}\left(x_{1},\ldots,x_{m}\right)=c$
for some $c\in F$, we set 
\[
R_{1}\left(\tilde{q}\right) \coloneqq \left(0,\left|c\right|\right).
\]
\item For a $\tilde{q}$ of the form $\tilde{q}\left(x_{1},\ldots,x_{m}\right)=x_{i}$
for some $1\leq i\leq m$, we set 
\[
R_{2}\left(\tilde{q}\right) \coloneqq \left(0,\left|x_{i}\right|\right).
\]
\item For a $\tilde{q}$ of the form $\tilde{q}\left(x_{1},\ldots,x_{m}\right)=x_{i}\circledcirc x_{j}$
for some $1\leq i,j\leq m$ and $\circledcirc\in\left\{ \oplus,\ominus\right\} $,
we set 
\[
R_{3}\left(\tilde{q}\right) \coloneqq \left(\varepsilon,\left|x_{i}\circledcirc x_{j}\right|\right).
\]
\item For a $\tilde{q}$ of the form $\tilde{q}\left(x_{1},\ldots,x_{m}\right)=x_{i}\odot x_{j}$
for some $1\leq i,j\leq m$, we set 
\[
R_{4}\left(\tilde{q}\right) \coloneqq \left(\varepsilon,\left|x_{i}\odot x_{j}\right|\right).
\]
and
\[
R_{4,\text{UFP}}\left(\tilde{q}\right) \coloneqq \left(\varepsilon,\left|x_{i}\odot x_{j}\right|\oplus u_{N}\right).
\]
\item For a $\tilde{q}$ of the form $\tilde{q}\left(x_{1},\ldots,x_{m}\right)=\left(x_{i}\circledcirc_{1}x_{j}\right)\odot\left(x_{h}\circledcirc_{2}x_{g}\right)$
for some $1\leq g,h,i,j\leq m$ and $\circledcirc_{1},\circledcirc_{2}\in\left\{ \oplus,\ominus\right\} $,
we set 
\[
R_{5}\left(\tilde{q}\right) \coloneqq \left(3\varepsilon-\left(\phi-14\right)\varepsilon^{2},\left|\left(x_{i}\circledcirc_{1}x_{j}\right)\odot\left(x_{h}\circledcirc_{2}x_{g}\right)\right|\right)
\]
and
\[
R_{5,\text{UFP}}\left(\tilde{q}\right) \coloneqq \left(3\varepsilon-\left(\phi-14\right)\varepsilon^{2},\left|\left(x_{i}\circledcirc_{1}x_{j}\right)\odot\left(x_{h}\circledcirc_{2}x_{g}\right)\right|\oplus u_{N}\right)
\]
with 
\[
\phi\coloneqq2\left\lfloor \frac{-1+\sqrt{4\varepsilon^{-1}+45}}{4}\right\rfloor .
\]
\item For a $\tilde{q}$ of the form $\tilde{q}\left(x_{1},\ldots,x_{m}\right)=\tilde{q}_{1}\left(x_{1},\ldots,x_{m}\right)\circledcirc\tilde{q}_{2}\left(x_{1},\ldots,x_{m}\right)$
with $\circledcirc\in\left\{ \oplus,\ominus\right\} $, we set 
\[
R_{6}\left(\tilde{q}\right) \coloneqq \left(\left(1+\varepsilon\right)\max\left(a_{1},a_{2}\right)+\varepsilon,m_{1}\oplus m_{2}\right)
\]
and
\[
R_{6,\text{UFP}}\left(\tilde{q}\right) \coloneqq \left(\left(1+\varepsilon\right)\max\left(a_{1},a_{2}\right)+\varepsilon,m_{1}\oplus m_{2}\right)
\]
with $\left(a_{i},m_{i}\right)\coloneqq E\left(\tilde{q}_{i}\right)$ and $\left(a_{i},m_{i}\right)\coloneqq E_{\text{UFP}}\left(\tilde{q}_{i}\right)$ respectively 
for $i=1,2$. 
\item For a $\tilde{q}$ of the form $\tilde{q}\left(x_{1},\ldots,x_{m}\right)=\tilde{q}_{1}\left(x_{1},\ldots,x_{m}\right)\odot\tilde{q}_{2}\left(x_{1},\ldots,x_{m}\right)$,
we set 
\[
R_{7}\left(\tilde{q}\right) \coloneqq \left(\left(1+\varepsilon\right)\left(a_{1}+a_{2}+a_{1}a_{2}\right)+\varepsilon,m_{1}\odot m_{2}\right)
\]
and
\[
R_{7,\text{UFP}}\left(\tilde{q}\right) \coloneqq \left(\left(1+\varepsilon\right)\left(a_{1}+a_{2}+a_{1}a_{2}\right)+\varepsilon,m_{1}\odot m_{2}\oplus u_{N}\right)
\]
with $\left(a_{i},m_{i}\right)\coloneqq E\left(\tilde{q}_{i}\right)$ and  $\left(a_{i},m_{i}\right)\coloneqq E_{\text{UFP}}\left(\tilde{q}_{i}\right)$ respectively for $i=1,2$. 
\end{enumerate}

We define $E\left(\tilde{q}\right)$
to be the first applicable map out of $R_{1},R_{2},R_{3},R_{4},R_{5},R_{6}$
and $R_{7}$ and analogously $E_{\text{UFP}}\left(\tilde{q}\right)$ with the respective UFP-variations of the rules.
\end{defn}
It is straightforward to see that $E$ and $E_{\text{UFP}}$ are well-defined because the rules are exhaustive in the sense that there is no subexpression
in a floating-point polynomial for which no rule is applicable and
any recursion through $R_{6}$ and $R_{7}$ or their UFP-variations terminates at the level of individual variables.
\begin{rem}
For two polynomials $a_{1},a_{2}$ evaluated in $\varepsilon$ with
all coefficients smaller than $\varepsilon^{-1}$, $\max\left(a_{1},a_{2}\right)$
can be obtained by lexicographic comparison of coefficients of terms
in ascending order, i.e. starting with the linear term, since there
are no constant terms in the polynomials that occur.
\end{rem}
\begin{lem}
\label{lem:E-no-uf}Let $\tilde{p}$ be an arbitrary floating-point
polynomial then the invariants for $E\left(\tilde{q}\right)$
hold for every subexpression $\tilde{q}$ of $\tilde{p}$ for every
choice of floating-point inputs $x_{1},\ldots,x_{m}\in F$ such that
no underflow occurs in the evaluation of any subexpression of $\tilde{q}$.
\end{lem}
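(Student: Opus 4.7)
The plan is to prove the lemma by structural induction on the floating-point subexpression $\tilde{q}$, mirroring the rule-based definition of $E$. At each node I would first dispose of the degenerate case where the computed $m$ evaluates to $\pm\infty$ or $\mathrm{NaN}$: since every $m$ produced by the rules is built only from constants, inputs, absolute values and the operators $\oplus,\odot$, Lemma~\ref{lem:fpn-error-bounds}(1) propagates $\infty$ or $\mathrm{NaN}$ through the whole expression, so invariant~\eqref{eq:invariant2eq1} is immediate. From that point on I would assume all computed values are finite real numbers, so that—together with the no-underflow hypothesis—the multiplicative error bounds of Definition~\ref{def:floating-point-ops} apply in both forms $|a\circledcirc b - a\circ b|\le\varepsilon|a\circ b|$ and $|a\circledcirc b-a\circ b|\le\varepsilon|a\circledcirc b|$, the latter being the standard equivalent for round-to-nearest away from subnormals.

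The base cases $R_1$ and $R_2$ are trivial, because $\tilde{q}=q$ and $m=|\tilde{q}|$ make both \eqref{eq:invariant2eq2} and \eqref{eq:invariant2eq3} equalities with $a=0$. The single-operation rules $R_3$ and $R_4$ then follow directly from the floating-point model: $m=|\tilde{q}|$ turns~\eqref{eq:invariant2eq2} into an identity, while~\eqref{eq:invariant2eq3} is exactly the $\varepsilon$-bound supplied by Definition~\ref{def:floating-point-ops}.

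For the recursive rules $R_6$ and $R_7$ I would apply the inductive hypothesis to both subexpressions $\tilde{q}_1,\tilde{q}_2$ and then combine. For $R_6$, the additive decomposition
\[
\tilde{q}_1\circledcirc\tilde{q}_2 - (q_1\circ q_2) = \bigl[(\tilde{q}_1\circledcirc\tilde{q}_2)-(\tilde{q}_1\circ\tilde{q}_2)\bigr] + (\tilde{q}_1-q_1)\circ(\tilde{q}_2-q_2)
\]
lets me bound the outer rounding by $\varepsilon|\tilde{q}_1\circledcirc\tilde{q}_2|\le\varepsilon(m_1\oplus m_2)$ and the propagated errors by $\max(a_1,a_2)(m_1+m_2)$; converting both to $m_1\oplus m_2$ via the monotone bound $m_1+m_2\le(1+\varepsilon)(m_1\oplus m_2)$ yields exactly the coefficient $(1+\varepsilon)\max(a_1,a_2)+\varepsilon$, while monotonicity of round-to-nearest applied to $|\tilde{q}_1\pm\tilde{q}_2|\le m_1+m_2$ gives~\eqref{eq:invariant2eq2}. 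Rule $R_7$ is analogous, using the Wilkinson-style decomposition $\tilde{q}_1\tilde{q}_2-q_1 q_2 = \tilde{q}_1(\tilde{q}_2-q_2)+q_2(\tilde{q}_1-q_1)$ and accounting for the outer rounding of $\odot$.

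The main obstacle will be rule $R_5$. A naive composition of $R_3,R_3,R_7$ only delivers $3\varepsilon+O(\varepsilon^2)$, whereas $R_5$ asserts the sharper coefficient $3\varepsilon-(\phi-14)\varepsilon^2$ with $\phi=2\lfloor(-1+\sqrt{4\varepsilon^{-1}+45})/4\rfloor$. Obtaining the negative $\varepsilon^2$ correction requires a targeted analysis of the pattern $(x_i\circledcirc_1 x_j)\odot(x_h\circledcirc_2 x_g)$ that exploits the restricted structure of two independent subtractions followed by a single product—the rounding of each difference constrains the attainable ulp of the product so that not all three rounding errors can be simultaneously worst case. I would isolate this as a standalone lemma and prove it by case-splitting on the relative exponents of the four operands and on which of the three roundings is extremal, in the spirit of the counting argument underpinning the tight constant in the Ozaki-style filter of Example~\ref{exa:2d-orientation-fpg-filter}. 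Once $R_5$ is settled, the remaining six rules slot into the structural induction without surprise, completing the proof.
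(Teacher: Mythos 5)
Your structural induction mirrors the paper's proof for $R_1$ through $R_4$ and the recursive rules $R_6,R_7$: dispose of $\pm\infty$ and NaN by propagation via Lemma~\ref{lem:fpn-error-bounds}, invoke the rounding model for the base operations, and push errors through with the additive and multiplicative decompositions you describe (the paper expands $(q_1\pm a_1 m_1)(q_2\pm a_2 m_2)$ directly rather than via the Wilkinson split, but the resulting coefficient $(1+\varepsilon)(a_1+a_2+a_1a_2)+\varepsilon$ is the same).

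The real divergence is $R_5$. The paper does not re-derive the coefficient $3\varepsilon-(\phi-14)\varepsilon^2$; it simply cites Lemma~3.1 of Ozaki et al.~\cite{Ozaki2016}, which proves invariant~\eqref{eq:invariant2eq3} for exactly the pattern $\left(x_i\circledcirc_1 x_j\right)\odot\left(x_h\circledcirc_2 x_g\right)$. You correctly observe that the naive composition of $R_3, R_3, R_7$ only delivers $3\varepsilon+3\varepsilon^2+\varepsilon^3$ and that $R_5$'s negative $\varepsilon^2$ correction needs a targeted argument, but you leave that argument entirely unexecuted — and since $R_5$ is by definition the \emph{first} applicable rule for expressions of that shape, its invariant must actually be established as written, so this is an open hole precisely at the step you yourself flag as the main obstacle. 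The intended route is simply to cite~\cite{Ozaki2016}; re-deriving it by exponent/rounding case analysis is possible in principle but unnecessary and substantially harder. Also note that the ``counting argument'' you attribute to Example~\ref{exa:2d-orientation-fpg-filter} is the FPG filter, which is not where the tight constant comes from; the relevant prior work is~\cite{Ozaki2016}.
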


Following \cite{Shewchuk1997}, we introduce the following convenient
notation that will be used in the proof. We extend the arithmetic operations $\circ$ to sets $A,B\subset\R$ by $A\circ B:=\{a\circ b \mid a\in A, b\in B\}$, identify $a\in\R$ with $\{a\}$ for $\circ \in \{+,-,\cdot \}$, and set $A\pm a := A + [-a,a]$.

\begin{proof}
For any subexpression $\tilde{q}$ to which $R_{1}$ or $R_{2}$ applies,
the statement is obvious. For subexpressions for which $R_{3}$ or
$R_{4}$ are the first applicable rules and no overflow occurs,
(\ref{eq:invariant2eq2}) holds by the definition of $m$ and (\ref{eq:invariant2eq3})
follows from the definition of floating-point rounding. If overflow
occurs, $m$ is infinity and (\ref{eq:invariant2eq1}) holds. For subexpressions,
to which $R_{5}$ applies, either (\ref{eq:invariant2eq1}) holds
if overflow occurs or, if no overflow occurs, (\ref{eq:invariant2eq2})
holds by definition and (\ref{eq:invariant2eq3}) is proven in \cite{Ozaki2016}
in Lemma 3.1.

If $R_{6}$ is the first applicable rule, we assume that the invariant
(\ref{eq:invariant2eq1}) or the invariants (\ref{eq:invariant2eq2})
and (\ref{eq:invariant2eq3}) hold for $\left(a_{1},m_{1}\right)\coloneqq E\left(\tilde{q}_{1}\right)$
and $\left(a_{2},m_{2}\right)\coloneqq E\left(\tilde{q}_{2}\right)$
and we consider the case that $\tilde{q}=\tilde{q}_{1}\oplus\tilde{q}_{2}$.
If $\tilde{q}_{1}$ or $\tilde{q}_{2}$ is either $\infty$ or NaN,
by the assumption so are $m_{1}$ or $m_{2}$ and consequently $m_{1}\oplus m_{2}$
and (\ref{eq:invariant2eq1}) holds. If no overflow occurs, we see
that 
\begin{align*}
\left|\tilde{q}\right| & =\left|\tilde{q}_{1}\oplus\tilde{q}_{2}\right|\\
 & \leq\left|\tilde{q}_{1}\right|\oplus\left|\tilde{q}_{2}\right|\\
 & \leq m_{1}\oplus m_{2}
\end{align*}
and 
\begin{align*}
\tilde{q} & =\tilde{q}_{1}\oplus\tilde{q}_{2}\\
 & \in \tilde{q}_{1}+\tilde{q}_{2}\pm\varepsilon\left|\tilde{q}_{1}\oplus\tilde{q}_{2}\right|\\
 & \subseteq \tilde{q}_{1}+\tilde{q}_{2}\pm\varepsilon\left(m_{1}\oplus m_{2}\right)\\
 & \subseteq q_{1}\pm a_{1}\left(\varepsilon\right)m_{1}+q_{2}\pm a_{2}\left(\varepsilon\right)m_{2}\pm\varepsilon\left(m_{1}\oplus m_{2}\right)\\
 & \subseteq q\pm\max\left(a_{1}\left(\varepsilon\right),a_{2}\left(\varepsilon\right)\right)\left(m_{1}+m_{2}\right)\pm\varepsilon\left(m_{1}\oplus m_{2}\right)\\
 & \subseteq q\pm\left(\max\left(a_{1}\left(\varepsilon\right),a_{2}\left(\varepsilon\right)\right)\left(1+\varepsilon\right)+\varepsilon\right)\left(m_{1}\oplus m_{2}\right),
\end{align*}
where we used the assumption that the invariant holds for the two
subexpressions and standard floating-point rounding error estimates.
The proof for $\tilde{q}=\tilde{q}_{1}\ominus\tilde{q}_{2}$ is analogous.

If $R_{7}$ is the first applicable rule, we assume that the invariant
holds for $\left(a_{1},m_{1}\right)\coloneqq E\left(\tilde{q}_{1}\right)$
and $\left(a_{2},m_{2}\right)\coloneqq E\left(\tilde{q}_{2}\right)$.
Analogous to above, the case of overflow is trivial, so we consider
the case that no overflow occurs. Again it holds that 
\begin{align*}
\left|\tilde{q}\right| & =\left|\tilde{q}_{1}\odot\tilde{q}_{2}\right|\\
 & \leq m_{1}\odot m_{2}.
\end{align*}
and 
\begin{align*}
\tilde{q} & =\tilde{q}_{1}\odot\tilde{q}_{2}\\
 & \in \tilde{q}_{1}\cdot\tilde{q}_{2}\pm\varepsilon\left|\tilde{q}_{1}\odot\tilde{q}_{2}\right|\\
 & \subseteq \tilde{q}_{1}\cdot\tilde{q}_{2}\pm\varepsilon\left(m_{1}\odot m_{2}\right)\\
 & \subseteq \left(q_{1}\pm a_{1}m_{1}\right)\cdot\left(q_{2}\pm a_{2}m_{2}\right)\pm\varepsilon\left(m_{1}\odot m_{2}\right)\\
 & \subseteq q_{1}q_{2}\pm a_{2}m_{2}q_{1}\pm a_{1}m_{1}q_{2}\pm a_{1}a_{2}m_{1}m_{2}\pm\varepsilon\left(m_{1}\odot m_{2}\right)\\
 & \subseteq q_{1}q_{2}\pm a_{2}\left(1+\varepsilon\right)\left(m_{1}\odot m_{2}\right)\pm a_{1}\left(1+\varepsilon\right)\left(m_{1}\odot m_{2}\right)\\
 & \quad \pm a_{1}a_{2}\left(1+\varepsilon\right)\left(m_{1}\odot m_{2}\right)\pm\varepsilon\left(m_{1}\odot m_{2}\right)\\
 & \subseteq q_{1}q_{2}\pm\left(\left(1+\varepsilon\right)\left(a_{1}+a_{2}+a_{1}a_{2}\right)+\varepsilon\right)\left(m_{1}\odot m_{2}\right).
\end{align*}
Because the recursion eventually terminates at a non-recursion case
(rules 1--5), the claims for $E\left(\tilde{q}_{1}\right)$
and $E\left(\tilde{q}_{2}\right)$ hold. 
\end{proof}
\begin{lem}
\label{lem:E-uf}Let $\tilde{p}$ be an arbitrary floating-point polynomial
and then the invariants for $E_{\rm{UFP}}\left(\tilde{q}\right)$ hold for
every subexpression $\tilde{q}$ of $\tilde{p}$ for every choice
of inputs $x_{1},\ldots,x_{m}\in F$.
\end{lem}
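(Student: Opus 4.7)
The plan is to prove the lemma by structural induction on $\tilde q$, mirroring the seven-case analysis in the proof of Lemma~\ref{lem:E-no-uf} but tracking the extra $u_{\mathrm{N}}$ contributions introduced by the UFP-variations of rules $R_4$, $R_5$ and $R_7$. The single identity driving everything is $\frac{1}{2}u_{\mathrm{S}} = \varepsilon u_{\mathrm{N}}$, which follows directly from the relation $u_{\mathrm{S}}=2\varepsilon u_{\mathrm{N}}$ recorded in Section~\ref{geometic predicates and robustness issues}. This identity converts the absolute-error term $\frac{1}{2}u_{\mathrm{S}}$ that appears in the multiplication model under underflow into the relative form $\varepsilon\cdot m$, provided $m\geq u_{\mathrm{N}}$.

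Before the induction, I would record two auxiliary facts. First, for any nonnegative representable $y\in F\cap\mathbb{R}$, the sum $y\oplus u_{\mathrm{N}}$ (when finite) satisfies $y\leq y\oplus u_{\mathrm{N}}$ and $y\oplus u_{\mathrm{N}}\geq u_{\mathrm{N}}$: the first because $y+u_{\mathrm{N}}\geq y$ and $y$ is itself representable, the second because $y+u_{\mathrm{N}}\geq u_{\mathrm{N}}$ and $u_{\mathrm{N}}$ is representable. Second, by monotonicity and sign symmetry of $\mathrm{rd}$, whenever $|x|\leq c$ for a real $x$ and a representable nonnegative $c$, we have $|\mathrm{rd}(x)|\leq c$. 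Combining these with the inductive bounds $|\tilde q_i|\leq m_i$ yields $|\tilde q_1\odot\tilde q_2|\leq m_1\odot m_2$ and $|\tilde q_1\oplus\tilde q_2|\leq m_1\oplus m_2$ whenever no overflow occurs. With these in hand, rules $R_1$ and $R_2$ are immediate, rule $R_3$ carries over verbatim from Lemma~\ref{lem:E-no-uf} by Lemma~\ref{lem:fpn-error-bounds}.2 (addition and subtraction are exact under underflow), and for $R_{4,\mathrm{UFP}}$ and $R_{5,\mathrm{UFP}}$ I would split on whether the outer multiplication underflows: in the non-underflow case the invariants reduce to those already proved for $E$, since $m$ only grew; in the underflow case the absolute error is at most $\frac{1}{2}u_{\mathrm{S}}=\varepsilon u_{\mathrm{N}}\leq\varepsilon(|x_i\odot x_j|\oplus u_{\mathrm{N}})$. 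For $R_{5,\mathrm{UFP}}$, the non-underflow rounding bound is imported from~\cite{Ozaki2016} exactly as in the $E$-proof.

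The inductive steps $R_{6,\mathrm{UFP}}$ and $R_{7,\mathrm{UFP}}$ then apply the inductive hypothesis to $\tilde q_1,\tilde q_2$ and repeat the algebraic chains from Lemma~\ref{lem:E-no-uf}. Rule $R_{6,\mathrm{UFP}}$ is essentially unchanged because the outer $\oplus$ or $\ominus$ is exact under underflow by Lemma~\ref{lem:fpn-error-bounds}.2 and because the magnitude term $m_1\oplus m_2$ coincides with that of $R_6$. For $R_{7,\mathrm{UFP}}$, the magnitude bound follows by the two auxiliary facts, and the error bound is derived term-by-term as in the $E$-proof, with the additional $\frac{1}{2}u_{\mathrm{S}}$ that may arise from underflow in the outer product absorbed into $\oplus u_{\mathrm{N}}$ via the identity $\frac{1}{2}u_{\mathrm{S}}=\varepsilon u_{\mathrm{N}}$. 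I expect the most delicate point to be exactly this bookkeeping at the top of $R_{7,\mathrm{UFP}}$: verifying that a single $\oplus u_{\mathrm{N}}$ at the outer level is enough, even though the children $m_i$ already include their own $u_{\mathrm{N}}$ corrections from lower rules, and that this outer $u_{\mathrm{N}}$ interacts correctly with the coefficient $(1+\varepsilon)(a_1+a_2+a_1a_2)+\varepsilon$ as it is propagated through the multiplicative rule.
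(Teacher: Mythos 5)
Your plan is structurally the paper's proof: the identity $\tfrac{1}{2}u_{\mathrm{S}}=\varepsilon u_{\mathrm{N}}$ is exactly the engine the paper uses, the split on whether the outer multiplication underflows at $R_{4,\mathrm{UFP}}$ and $R_{5,\mathrm{UFP}}$ is how the paper argues, and the absorption of $\tfrac{1}{2}u_{\mathrm{S}}$ into the $\oplus u_{\mathrm{N}}$ term via the coefficient $\varepsilon$ inside $\left(1+\varepsilon\right)\left(a_1+a_2+a_1a_2\right)+\varepsilon$ is precisely how the paper closes the $R_{7,\mathrm{UFP}}$ chain. So this is essentially the same route.

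The one genuine ingredient you are missing is that the paper strengthens the induction hypothesis with an auxiliary invariant, (I3): for every subexpression, either $\tilde{q}=q$ exactly or $m\geq u_{\mathrm{N}}$. This is carried through every rule and consumed specifically at $R_{6,\mathrm{UFP}}$, which you dismiss as ``essentially unchanged.'' The reason $R_6$ is the delicate one is the opposite of what you flag: it is the only recursive rule whose magnitude term, $m_1\oplus m_2$, is \emph{not} bumped by $\oplus u_{\mathrm{N}}$, so $m$ can be subnormal or zero. The paper handles this by a case split: if some $m_i\geq u_{\mathrm{N}}$, then $m\geq u_{\mathrm{N}}$ and the Lemma~\ref{lem:E-no-uf} chain goes through; if both $m_i<u_{\mathrm{N}}$, then by (I3) both children are already exact, and then the parent is either exact (if it underflows) or has $m\geq u_{\mathrm{N}}$ (if it does not), re-establishing (I3) for the parent. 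Your observation that $\oplus/\ominus$ are exact under underflow is one piece of this, but you do not thread through the induction the link between ``$m$ small'' and ``subexpression exact,'' which is what makes the argument at $R_6$ robust and what lets (I3) be re-proved one level up. Without (I3) you would have to argue directly that the $R_6$ inequality chain survives both $m_1\oplus m_2$ underflowing and $\tilde q_1\oplus\tilde q_2$ underflowing simultaneously; this can be done, but it is exactly the kind of corner you currently wave past. Finally, the ``most delicate point'' you defer at $R_{7,\mathrm{UFP}}$ is indeed discharged by the paper: since $m=m_1\odot m_2\oplus u_{\mathrm{N}}\geq u_{\mathrm{N}}$, the single outer $\varepsilon u_{\mathrm{N}}$ is dominated by $\varepsilon m$, and the children's own $u_{\mathrm{N}}$ corrections are already absorbed into $m_1,m_2$ via (I2.2), so no compounding occurs; you should actually carry out this check rather than leave it as an expectation.
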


\begin{proof}
Because it is useful for the parts of the proof that apply to the recursive rules $R_6$ and $R_7$, we will also prove for each rule applied to a subexpression $\tilde{q}$ that

\begin{equation}
q=\tilde{q} \quad \lor \quad  m \geq u_N \tag{I3} \label{eq:invariant3}
\end{equation}

holds.

For $R_{1}$ and $R_{2}$ there is nothing to prove.

For $R_{3}$ the reasoning given in the proof for Lemma~\ref{lem:E-no-uf} still applies
because the assumption of no underflow occurring was not used. If underflow occurs then $\tilde{q}$ is evaluated exactly, i.e. $\tilde{q} = q$ and if no underflow occurs then $\tilde{q}$ is not subnormal and hence it holds that $m \geq u_N$.

For $R_{4,\text{UFP}}$, we first note that $m$ is always non-zero and not smaller than either $\tilde{q}$ or $u_N$ so invariants~\eqref{eq:invariant2eq2} and \eqref{eq:invariant3} hold. Invariant~\eqref{eq:invariant2eq3} then follows directly from $2\varepsilon u_{N}=u_{S}$ and 
Lemma~\ref{def:floating-point-ops}.

\eqref{eq:invariant2eq3} for $R_{5,\text{UFP}}$ was proven as Lemma 3.1 in \cite{Ozaki2016}. For \eqref{eq:invariant2eq2} and \eqref{eq:invariant3} the same reasoning as for $R_{r,\text{UFP}}$ applies.

For the recursive rules $R_{6,\text{UFP}}$ and $R_{7,\text{UFP}}$, we assume at all invariants 
hold for the respective subexpressions $\tilde{q}_1$ and $\tilde{q}_2$, this is again justified because all recursions will be cases of rules $R_{1}$ to $R_{5,\text{UFP}}$ for which the invariants were already proven to hold or to other cases for rules $R_{6,\text{UFP}}$ and $R_{7,\text{UFP}}$.

For $R_{6,\text{UFP}}$, as in Lemma~\ref{lem:E-no-uf}, it is obvious that invariant~\eqref{eq:invariant2eq2} holds. If either $m_{1}$ or $m_{2}$ is equal
to or greater than $u_{N}$, then no underflow can occur in the evaluation
of $m$ and the invariant holds as proven in Lemma~\ref{lem:E-no-uf} and $m$ is greater than or equal to $u_{N}$.
If both $m_1$ and $m_2$ are smaller than $u_{N}$ then $\tilde{q}_{1}$ and $\tilde{q}_{2}$ are evaluated
error-free and $\tilde{q}$ is error-free if underflow occurs. If no underflow occurs in the evaluation of $\tilde{q}$,
the invariant also holds as proven in Lemma~\ref{lem:E-no-uf} and in this case $m$
is equal to or greater than $u_{N}$.

For $R_{7,\text{UFP}}$, we first note that, as in $R_{4,\text{UFP}}$, $m$ is greater than or equal to both $\tilde{q}$ and $u_N$, so invariants~\ref{eq:invariant2eq2} and \eqref{eq:invariant3} hold.
To show that \eqref{eq:invariant2eq3} holds, we use Lemma~\ref{def:floating-point-ops}
to obtain
\begin{align*}
\tilde{q} & =\tilde{q}_{1}\odot\tilde{q}_{2}\\
 & \in \tilde{q}_{1}\cdot\tilde{q}_{2}\pm\varepsilon\left|\tilde{q}_{1}\odot\tilde{q}_{2}\right|\pm\frac{1}{2}u_{S}\\
 & \subseteq \tilde{q}_{1}\cdot\tilde{q}_{2}\pm\varepsilon\left(m_{1}\odot m_{2}\right)\pm\frac{1}{2}u_{S}\\
 & \subseteq \tilde{q}_{1}\cdot\tilde{q}_{2}\pm\varepsilon\left(m_{1}\odot m_{2}+u_{N}\right)\\
 & \subseteq \left(q_{1}\pm a_{1}m_{1}\right)\cdot\left(q_{2}\pm a_{2}m_{2}\right)\pm\varepsilon\left(m_{1}\odot m_{2}+u_{N}\right)\\
 & \subseteq q_{1}q_{2}\pm a_{2}m_{2}q_{1}\pm a_{1}m_{1}q_{2}\pm a_{1}a_{2}m_{1}m_{2}\pm\varepsilon\left(m_{1}\odot m_{2}+u_{N}\right)\\
 & \subseteq q_{1}q_{2}\pm a_{2}\left(1+\varepsilon\right)\left(m_{1}\odot m_{2}\right)\pm a_{1}\left(1+\varepsilon\right)\left(m_{1}\odot m_{2}\right)\\
 & \pm a_{1}a_{2}\left(1+\varepsilon\right)\left(m_{1}\odot m_{2}\right)\pm\varepsilon\left(m_{1}\odot m_{2}+u_{N}\right)\\
 & \subseteq q_{1}q_{2}\pm\left(\left(1+\varepsilon\right)\left(a_{1}+a_{2}+a_{1}a_{2}\right)+\varepsilon\right)\left(m_{1}\odot m_{2}+u_{N}\right),
\end{align*}
which completes the proof.
\end{proof}

\subsection{Floating-Point Filters}

The following result provides two semi-static filters for floating-point
predicates that evaluate the sign of a polynomial. It is only stated
for floating-point realisations of polynomials that are sums or differences.
For products, the signs of each factor could be obtained individually
and then multiplied.
\begin{thm}
Let $p\in\mathbb{R}\left[x_{1},\ldots,x_{m}\right]$ be a polynomial
and $\tilde{p}\in F\left[x_{1},\ldots,x_{m}\right]$ be some floating-point
realisation of $p$ of the form $\tilde{p}=\tilde{p}_{1}\oplus\tilde{p}_{2}$
or $\tilde{p}=\tilde{p}_{1}\ominus\tilde{p}_{2}$.
\begin{enumerate}
\item Let $\left(a_{1},m_{1}\right)\coloneqq E\left(\tilde{p}_{1}\right)$
and $\left(a_{2},m_{2}\right)\coloneqq E\left(\tilde{p}_{2}\right)$.
Moreover, let constants $a_3,a_4 \in F$ satisfy
\[
    a_3 > \frac{\max\left(a_{1},a_{2}\right)}{1-\epsilon}, \qquad  a_4 \ge a_{3}\left(1+\varepsilon\right)^{2},
\]
and define 
\[
e\left(x_{1},\ldots,x_{m}\right)\coloneqq a_{4}\odot\left(m_{1}\left(x_{1},\ldots,x_{m}\right)\oplus m_{2}\left(x_{1},\ldots,x_{m}\right)\right).
\]
Then, for every choice of $x_{1},\ldots,x_{m}\in F\backslash\left\{ \mathrm{NaN},\infty,-\infty\right\} $
such that no underflow occurs in the evaluation of $\tilde{p}$ or
$e,$
\[
f\left(x_{1},\ldots,x_{m}\right)\coloneqq\begin{cases}
\mathrm{sign}\left(\tilde{p}\left(x_{1},\ldots,x_{m}\right)\right) & \left|\tilde{p}\right|>e\lor e=0\\
\mathrm{uncertain} & \text{otherwise}
\end{cases}
\]
is a valid filter.
\item Let $\left(a_{1},m_{1}\right)\coloneqq E_{{\rm UFP}}\left(\tilde{p}_{1}\right)$
and $\left(a_{2},m_{2}\right)\coloneqq E_{{\rm UFP}}\left(\tilde{p}_{2}\right)$.
We set $a_{3}$ and $a_{4}$ as in 1. and 
\[
e\left(x_{1},\ldots,x_{m}\right)\coloneqq a_{4}\odot\left(m_{1}\left(x_{1},\ldots,x_{m}\right)\oplus m_{2}\left(x_{1},\ldots,x_{m}\right)\right)\oplus u_{{\rm S}}.
\]
Then for every choice of $x_{1},\ldots,x_{m}\in F\backslash\left\{ \mathrm{NaN},\infty,-\infty\right\} $
\[
f\left(x_{1},\ldots,x_{m}\right)\coloneqq\begin{cases}
\mathrm{sign}\left(\tilde{p}\left(x_{1},\ldots,x_{m}\right)\right) & \left|\tilde{p}\right|>e\\
\mathrm{uncertain} & \text{otherwise}
\end{cases}
\]
is a valid filter.
\end{enumerate}
Note that $\left|\tilde{p}\right|>e$ always evaluates as false if $e$ is $\infty$ or
$\mathrm{NaN}$.
\end{thm}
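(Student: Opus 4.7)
The plan is to reduce the validity of both filters to showing $|\tilde p - p| < |\tilde p|$ on the branch where the filter returns $\mathrm{sign}(\tilde p)$, so that the two signs must agree. To this end, I would first apply the invariants from Lemma~\ref{lem:E-no-uf} (for part~1) or Lemma~\ref{lem:E-uf} (for part~2) to the subexpressions $\tilde p_1, \tilde p_2$, giving $|\tilde p_i| \le m_i$ and $|\tilde p_i - p_i| \le a_i\, m_i$, and then perform one additional propagation step for the outermost operation $\tilde p = \tilde p_1 \circledcirc \tilde p_2$ with $\circledcirc \in \{\oplus, \ominus\}$. This step is exactly the computation carried out in the $R_6$ (respectively $R_{6,\mathrm{UFP}}$) case of Definition~\ref{def:error-bound-rules}, and yields
\[
|\tilde p - p| \;\le\; \bigl((1+\varepsilon)\max(a_1,a_2) + \varepsilon\bigr)(m_1 \oplus m_2),
\]
with an additional $u_S$ contribution in part~2 coming from possible underflow in the outer sum and product.

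It then remains to show that the runtime quantity $e$ actually dominates this theoretical bound. The defining inequalities $a_3 > \max(a_1,a_2)/(1-\varepsilon)$ and $a_4 \ge a_3(1+\varepsilon)^2$ are tailored for exactly this purpose: the factor $1/(1-\varepsilon)$ provides slack for the rounding error in the floating-point product $a_4 \odot (m_1 \oplus m_2)$, which in the no-underflow case satisfies $a_4 \odot (m_1 \oplus m_2) \ge (1-\varepsilon)\, a_4\,(m_1 \oplus m_2)$, while the factor $(1+\varepsilon)^2$ is designed to absorb the jump from $\max(a_1,a_2)$ to the $R_6$-style coefficient $(1+\varepsilon)\max(a_1,a_2) + \varepsilon$. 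Combining these, $|\tilde p| > e$ implies $|\tilde p| > |\tilde p - p|$ and therefore $\mathrm{sign}(p) = \mathrm{sign}(\tilde p)$.

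The degenerate cases need separate arguments. If $e = 0$ in part~1, then $a_4 > 0$ forces $m_1 \oplus m_2 = 0$, hence $m_1 = m_2 = 0$; invariants~\eqref{eq:invariant2eq2} and~\eqref{eq:invariant2eq3} then give $\tilde p_1 = \tilde p_2 = p_1 = p_2 = 0$, so both $\tilde p$ and $p$ vanish and the returned sign $0$ is correct. If any intermediate value becomes $\mathrm{NaN}$ or $\pm\infty$, Lemma~\ref{lem:fpn-error-bounds} propagates it through the $m_i$ into $e$, and the final note in the theorem statement guarantees the filter safely returns uncertain. For part~2, Lemma~\ref{lem:E-uf} removes the no-underflow hypothesis on the subexpressions at the cost of the $\oplus u_N$ corrections in the recursion; the additional $\oplus u_S$ summand in $e$ then absorbs the possible absolute error of at most $\tfrac12 u_S$ in the final multiplication $a_4 \odot (m_1 \oplus m_2)$ whenever it underflows, mirroring the argument used for $R_{4,\mathrm{UFP}}$, while Lemma~\ref{lem:fpn-error-bounds}(2) ensures the outer $m_1 \oplus m_2$ is exact whenever it underflows and hence needs no further correction.

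The main technical obstacle is the careful $\varepsilon$-bookkeeping: one has to verify that the two $(1-\varepsilon)$ losses (from rounding of $m_1 + m_2$ implicit in the use of $m_1 \oplus m_2$, and from the final product) together with the $(1+\varepsilon)^2$ slack in $a_4$ really do dominate the $R_6$-style coefficient $(1+\varepsilon)\max(a_1,a_2) + \varepsilon$, and in part~2 additionally the $u_S$ contribution. This reduces to a handful of polynomial-in-$\varepsilon$ inequalities of the same flavour as those appearing in the proofs of Lemmas~\ref{lem:E-no-uf} and~\ref{lem:E-uf}, and is where essentially all of the arithmetic detail of the theorem lies.
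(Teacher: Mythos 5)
Your plan to bound $|\tilde p - p|$ by an $R_6$-style error and then dominate it with $e$ does not close, and the gap is in exactly the place you flagged as ``$\varepsilon$-bookkeeping.'' The $R_6$ propagation gives
\[
|\tilde p - p| \;\le\; \bigl(\max(a_1,a_2)(1+\varepsilon)+\varepsilon\bigr)\bigl(m_1\oplus m_2\bigr),
\]
and for the filter to be valid you would need $a_4\odot(m_1\oplus m_2)$ to dominate this. But $a_4$ only exceeds $\max(a_1,a_2)$ by the multiplicative slack $(1+\varepsilon)^2/(1-\varepsilon)\approx 1+3\varepsilon$, so after paying a $(1-\varepsilon)$ for the final product you are left needing $\max(a_1,a_2)(1+\varepsilon)^2 \ge \max(a_1,a_2)(1+\varepsilon)+\varepsilon$, i.e.\ $\max(a_1,a_2)\gtrsim 1$. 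Since $a_1,a_2$ are themselves $O(\varepsilon)$ (e.g.\ $\approx 3\varepsilon$ for the orientation predicate), this fails by a factor on the order of $\varepsilon^{-1}$. The additive $\varepsilon$ generated by the outermost rounding is, relative to $m_1\oplus m_2$, not absorbable by the constants $a_3,a_4$ as specified.

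The paper avoids this by \emph{not} using the $R_6$ recurrence for the outermost $\oplus$/$\ominus$. Instead it charges the rounding error of that last operation to $|\tilde p|$ itself: $\tilde p\in(\tilde p_1+\tilde p_2)\pm\varepsilon\tilde p$, so after substituting the subexpression bounds one gets $p\in\tilde p\pm\varepsilon\tilde p\pm\max(a_1,a_2)(m_1+m_2)$. The sign agreement condition is then $(1-\varepsilon)|\tilde p| > \max(a_1,a_2)(m_1+m_2)$, and the $1/(1-\varepsilon)$ built into $a_3$ is there precisely to absorb the $(1-\varepsilon)$ on the \emph{left-hand side}, not the rounding of the final product. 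The $(1+\varepsilon)^2$ in $a_4$ then absorbs both $m_1+m_2\le(1+\varepsilon)(m_1\oplus m_2)$ and the rounding of $a_4\odot(m_1\oplus m_2)$. So you have both misassigned the roles of the two slack factors and chosen a strictly weaker bound on the outermost error; the latter is fatal. Your treatment of the $e=0$ case, of overflow/NaN via Lemma~\ref{lem:fpn-error-bounds}, and of the $u_S$ correction in part~2 all match the paper, but they sit on top of the broken step.
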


\begin{proof}
We first prove 1. and we assume without loss of generality that $\tilde{p}=\tilde{p}_{1}\oplus\tilde{p}_{2}$.
Using Lemma~\ref{lem:fpn-error-bounds} and Lemma~\ref{lem:E-no-uf}, it holds that 
\begin{align*}
\tilde{p} & =\tilde{p}_{1}\oplus\tilde{p}_{2}\\
 & \subseteq\left(\tilde{p}_{1}+\tilde{p}_{2}\right)\pm\varepsilon\tilde{p}\\
 & \subseteq p\pm\varepsilon\tilde{p}\pm a_{1}m_{1}\pm a_{2}m_{2}
\end{align*}
and equivalently 
\begin{align*}
p & \in\tilde{p}\pm\varepsilon\tilde{p}\pm a_{1}m_{1}\pm a_{2}m_{2}\\
 & \subseteq\tilde{p}\pm\varepsilon\tilde{p}\pm\max\left(a_{1},a_{2}\right)\left(m_{1}+m_{2}\right).
\end{align*}
From this it follows that the signs of $p$ and $\tilde{p}$ are equal
if
\begin{equation}
\left(1-\varepsilon\right)\left|\tilde{p}\right|>\max\left(a_{1},a_{2}\right)\left(m_{1}+m_{2}\right).\label{eq:thm-proof-sufficient}
\end{equation}
The inequality
\begin{equation}
\left|\tilde{p}\right|>a_{3}\left(m_{1}+m_{2}\right)\label{eq:thm-proof-sufficient-2}
\end{equation}
is equivalent to 
\[
\left(1-\varepsilon\right)\left|\tilde{p}\right|>a_{3}\left(1-\varepsilon\right)\left(m_{1}+m_{2}\right),
\]
which implies (\ref{eq:thm-proof-sufficient}), so (\ref{eq:thm-proof-sufficient-2})
is also a sufficient condition. Lastly, we see that 
\begin{align*}
a_{3}\left(m_{1}+m_{2}\right)\leq & a_{3}\left(1+\varepsilon\right)\left(m_{1}\oplus m_{2}\right)\\
\leq & \left(a_{3}\left(1+\varepsilon\right)^{2}\right)\odot\left(m_{1}\oplus m_{2}\right)\\
\leq & a_{4}\odot\left(m_{1}\oplus m_{2}\right),
\end{align*}
where the second step uses the no-underflow-assumption. Hence, 
\[
\left|\tilde{p}\right|>a_{4}\odot\left(m_{1}\oplus m_{2}\right)\eqqcolon e
\]
is a sufficient condition for the signs of $p$ and $\tilde{p}$ being
equal. It remains to consider the case $e=0$. If $a_{1}$ and $a_{2}$
are both zero, then $\tilde{p}$ is a simple expression and its sign
is trivially correct, which makes $f$ always valid. If either $a_{1}$
or $a_{2}$ is non-zero, then $a_{4}$ is easily seen to be non-zero
too and $e$ can only be zero if both $m_{1}$ and $m_{2}$ are zero,
since we assumed that no underflow occurs. If $m_{1}=m_{2}=0$, then
by Lemma~\ref{lem:E-no-uf}, $\tilde{p}_{1}=\tilde{p}_{2}=p_{1}=p_{2}=0$, and $\tilde{p}=p=0$.

The proof for 2. is analogous except for the constant $u_{\mathrm{S}}$
being added to $e$ in place of assuming no underflow occurring,
the omittance of the case $e=0$, which can not occur in 2 and the usage of Lemma~\ref{lem:E-uf} in place of Lemma~\ref{lem:E-no-uf}.
\end{proof}
\begin{rem}
Note that the constants $a_{3}$ and $a_{4}$ do not depend on the
input but only on the expression of $\tilde{p}$ so in practice 
they can be computed at compile-time in floating point or exact arithmetic. 

\end{rem}

This filter differs from the semi-static filters (called stage A)
in \cite{Shewchuk1997}, which do not guarantee valid results in cases
of underflow. It also differs from the semi-static filters generated
by FPG \cite{meyer:inria-00344297} because only a single condition
is evaluated, rather than three conditions, which means that most
predicate calls for non-degenerate inputs can be decided on a code
path with a single, well-predictable branch.

With these two properties, having only a single branch on the filter success
code path and validity for inputs that can cause underflow, this
procedure to construct semi-static filters can be seen as a generalisation
of the semi-static 2D orientation filter presented in \cite{Ozaki2016}.
For the 2D orientation predicate, in particular, our approach produces
a more pessimistic error bound than \cite{Ozaki2016}, which could be considered as the price to pay for using a more general method.

The semi-static error bound $e$
can be turned into a static error bound by evaluating $m_{1}\oplus m_{2}$
not in specific input values $x_{1},\ldots,x_{m}$ but in bounds on
these values, $\left[\underline{x}_{1},\overline{x}_{1}\right],\ldots,\left[\underline{x}_{m},\overline{x}_{m}\right]$
using interval arithmetic, or by obtaining its maximum over some more
general domain in $F^{m}$. This yields a static or almost static
filter.

\subsection{Zero-Filter}
With underflow protection, the right-hand side of our semi-static filter condition
will never be zero, hence the filter will always fail, as in returning
``uncertain'', if the true sign of $p$ is $0$. For inputs in $F$
that approximate a uniform distribution on an interval in $\mathbb{R}$,
$p=0$ is extremely unlikely, but in some practical input data, it
might be more common. 

\begin{example}
Consider the 2D orientation predicate with the floating-point realisation
\[
\tilde{p}=\left(a_{x}\ominus c_{x}\right)\odot\left(b_{y}\ominus c_{y}\right)\ominus\left(a_{y}\ominus c_{y}\right)\odot\left(b_{x}\ominus c_{x}\right).
\]
It is easy to check that if either point $a$ or $b$ coincides with
point $c$ or if all points share the same $x$ or $y$ coordinate
and no overflow occurs, then $\tilde{p}$ evaluates to zero.
Such cases can be common degeneracies in real-world data.

It can also be checked that the error bound $e$ from our semi-static filter
without underflow-protection would be zero in either of these cases, so such degeneracies can be decided quickly by the filter.
The error bound of the $\mathrm{UFP}$-variation of our filter, though,
would not zero because non-zero terms would be introduced in the error
bounds of the multiplications and in the definition of $e$ itself. 
\end{example}

In this case, a simple filter that can certify
common cases for inputs that produce zeroes, can be useful. Such a
filter can be produced using the following rules.
\begin{defn}
(Zero-Filter) Let $\tilde{p}$ be a floating-point realisation of
a polynomial and let $x_{1},\ldots,x_{m}\in F$ a given set of input values.
We define the following rules.
\begin{enumerate}
\item For a subexpression $\tilde{q}$ of the form $\tilde{q}=c$ for some
constant $c\in F$ or input value $x_{i}$ for $i\in\left\{ 1,\ldots,m\right\} $,
we define 
\[
Z_{1}\left(\tilde{q};x_{1},\ldots,x_{m}\right)=\begin{cases}
\text{true}, & c=0\\
\text{false}, & \text{otherwise}.
\end{cases}
\]
\item For a subexpression $\tilde{q}$ of the form $\tilde{q}=x_{i}\circledcirc x_{j}$
for $1\leq i,j\leq m$ and $\circledcirc\in\left\{ \oplus,\ominus\right\} $,
we define 
\[
Z_{2}\left(\tilde{q};x_{1},\ldots,x_{m}\right)=\begin{cases}
\text{true}, & \tilde{q}=0\\
\text{false}, & \text{otherwise}.
\end{cases}
\]
\item For a subexpression $\tilde{q}$ of the form $\tilde{q}=\tilde{q}_{1}\circledcirc\tilde{q}$
for $1\leq i,j\leq m$ and $\circledcirc\in\left\{ \oplus,\ominus\right\} $,
we define 
\[
Z_{3}\left(\tilde{q};x_{1},\ldots,x_{m}\right)
= Z\left(\tilde{q}_{1};x_{1},\ldots,x_{m}\right)\land Z\left(\tilde{q}_{2};x_{1},\ldots,x_{m}\right).
\]
\item For a subexpression $\tilde{q}$ of the form $\tilde{q}=\tilde{q}_{1}\odot\tilde{q}_{2}$, we define
\[
Z_{4}\left(\tilde{q};x_{1},\ldots,x_{m}\right) 
=  Z\left(\tilde{q}_{1};x_{1},\ldots,x_{m}\right)\lor Z\left(\tilde{q}_{2};x_{1},\ldots,x_{m}\right).
\]
\end{enumerate}
We define $Z\left(\tilde{q};x_{1},\ldots,x_{m}\right)$ to be the
result of the first applicable rule out of $Z_{1},Z_{2},Z_{3},Z_{4}$.
\end{defn}

The zero-filter returns the sign $0$ if $Z\left(\tilde{p};x_{1},\ldots,x_{m}\right)$
is true and ``uncertain'' otherwise. It is easy to verify that
this filter is valid for all inputs regardless of range issues such
as overflow or underflow.

\section{Numerical Results}

The exact predicates derived in the previous section are designed to be fast, applicable to general polynomial expressions, and simple to use. These goals must be reflected in their implementation, which is briefly covered before benchmark results are presented.

\subsection{C++ Implementation} \label{sec:metaprogramming}

Our implementation of exact predicates is based on C++ template and
constexpr metaprogramming, making use of the Boost.Mp11 library~\cite{MP11}. The main design goal is the avoidance of
runtime overhead like the one that was seen in the C++-wrapper implementation in~\cite{burnikel_exact_2001} because geometric predicates can be found on
performance-critical code paths in geometric algorithms and can make up a large
proportion of overall runtime as the benchmarks in Section~\ref{subsec:benchmarks}  show. Further design goals include flexibility and
extensibility with regard to the choice and order of filters as well
as expressivity and simplicity in the definition of predicate expressions.

Exact predicates are implemented as variadic class templates for staged
predicates that hold a tuple of zero or more stages implementing filters
or the exact arithmetic evaluation. If all filters are semi-static, the instantiated
class is stateless and can be constructed without arguments and with no
runtime cost. The static parts of semi-static error bounds are computed
at compile-time from the predicate expression and static type information
for the calculation type. If almost static or static filters are included,
input bounds need to be provided at construction for the computation
of error bounds. For almost static filters, an update member function
is provided to update error bounds. The exact predicate is called
through a variadic function that takes a variable but compile-time
static number of inputs in the calculation type and returns an integer
out of $-1,0$ and $1$, that represents the result sign.

The individual stages are expected to follow the same basic interface.
Each stage provides at least a member function that is called with
input values and returns an integer that represents either the result
sign or a constant that indicates uncertainty. For stages that require
the computation of runtime constants, e.g. static and almost static
filters, constructor and update members need to be implemented as
well. Otherwise, the stages are default constructed at no runtime cost.
This general interface allows users of the library to extend exact
predicates with custom filters beyond those provided by our implementation
to better suit their algorithms and data sets, such as the filter shown in Example~\ref{exa:incircle-rect}.

\begin{lstlisting}
using ssf = semi_static_filter</* ... */>;
using es = predicate_approximation </* ... */, CGAL::Gmpzf>;
// This stage is exact because it uses an exact number type.

staged_predicate<ssf, es> pred;
// default constructed and stateless 2-stage predicate

int sign = pred.apply(ax, ay, bx, ...);
// exact value of the predicate p(a,b,..)
\end{lstlisting}

\begin{example} \label{exa:incircle-rect}
The 2D \predicate{incircle} predicate on four 2D points $p_{1},\ldots,p_{4}$
decides whether $p_{4}$ lies inside, on or outside of the oriented
circle passing through $p_{1},p_{2}$ and $p_{3}$, assuming the
points do not lie on a line. A pattern of degenerate inputs are four
points that form a rectangle. For this input, $p_{4}$ clearly lies
on the circle (indicated by a sign of $0$) but a forward error bound
filter could classify the case as undecidable and forward it to a
computationally expensive exact stage. The following listing illustrates
a custom filter that conforms to the previously described interface
and could be used with our implementation of staged predicates.
\begin{lstlisting}
struct incircle_rect_filter
{
  // stateless, no constructor or update method required
  template <typename CalculationType>
  static inline int apply(CalculationType ax, /* ... */)
  {
    if( (ax == bx && by == cy && cx == dx && dy == ay) ||
        /* ... */ )
      return 0;
    else
      return sign_uncertain;
  }
};
\end{lstlisting}
\end{example}

At the core of the implementation is the compile-time processing of
polynomial expressions for the derivation of error bound expressions.
Arithmetic expressions are represented in the C++ type system using
expression templates, a technique described in \cite{veldhuizen1995expression}. 
The most basic expressions in our implementation are types
representing the leaves of expression trees. Those leaves are either
compile-time constants (indexed with zero) or input values (indexed
with a positive number). More complex expressions can be built from
these placeholders using the elementary operators +, - and {*}.

Forward error bound expression types are deduced at compile-time based
on a list of rule class templates. The interface of each rule class
template requires a constexpr function that expects an expression
template and returns a bool indicating whether the rule is applicable
to the expression, and a class template for the error bound based on
the rule. Error bounds are implemented in the form of constexpr integer
arrays that represent the coefficients of the polynomial in $\varepsilon$
and a magnitude expression template. The rules can be extended through
custom rules that conform to the interface.

\begin{lstlisting}
constexpr auto orient2d = 
  (_1 - _5) * (_4 - _6) - (_3 - _5) * (_2 - _6);
// expression template representing the 2D orientation 
// predicate expression where _1, _2, _3, ... are 
// placeholders for ax, ay, bx, ...

using ssf = semi_static_filter<
  orient2d,
  forward_error_bound_expression<
    orient2d,
    double,
    /* ... rules ... */>
  >;
// a shorter alias for this construct is provided
\end{lstlisting}

\begin{example}
Consider a 2D orientation problem for points whose coordinates are
not binary floating-point numbers, e.g. because the input is given
in a decimal or rational format. The rules given in definition \ref{def:error-bound-rules} 
are not designed for this problem but with a custom
error bound rule, our implementation can be extended to generate a
filter for inputs that are rounded to the nearest floating-point number.
Such a filter could be used before going into a more computationally
expensive stage operating on decimal or rational numbers. 
\begin{lstlisting}
struct rounded_input
{
  template <typename Expression, /* ... */>
  static constexpr bool applicable()
  {
    if constexpr (Expression::is_leaf)
      return Expression::argn > 0;
    else
      return false;
  }

  template <typename Expression, /* ... */>
  struct error_bound
  {
    using magnitude = abs<Expression>;
    static constexpr std::array<long, 3> a
      {1, 0, 0};
	// the entries represent coefficients 
	// of the polynomial in eps
  };
};
\end{lstlisting}
The listing illustrates a custom rule. It is only applicable for expressions
that are input values, i.e.\ expressions of the form $\tilde{q}\left(x_{1},\ldots,x_{n}\right)=x_{i}$.
In the context of our implementation these expressions are leaves
of the expression tree with a positive index, and the error bound
is $R\left(\tilde{q}\right)=\left(\varepsilon,\left|x_{i}\right|\right)$. 
Using a rule set consisting of this custom rule, $R_{6,0}$ and $R_{7,0}$
on the 2D orientation predicate, yields the semi-static error bound
\[
\left(5\varepsilon\oplus32\varepsilon^{2}\right)\left(\left(\left|a_{x}\right|\oplus\left|c_{x}\right|\right)\odot\left(\left|b_{y}\right|\oplus\left|c_{y}\right|\right)\oplus\left(\left|a_{y}\right|\oplus\left|c_{y}\right|\right)\odot\left(\left|b_{x}\right|\oplus\left|c_{x}\right|\right)\right).
\]
\end{example}

Besides forward error bound based filters discussed in this paper, our implementation also contains templates for filters and exact stages based on the same principles as the stages B and D in~\cite{Shewchuk1997}.

\subsection{Benchmarks} \label{subsec:benchmarks}

To test the performance of our approach and implementation, we measured
timings for a number of benchmarks that are provided by the CGAL library.
The design of the 2D and 3D geometry kernels concept in CGAL as documented
in \cite{cgal:bfghhkps-lgk23-21a} provide  a simple way to test our predicates
in CGAL algorithms by deriving from the \lstinline{Simple_cartesian<double>}
kernel and overriding all predicate objects that may suffer from rounding
errors with predicates generated from our implementation.

The performance with the resulting custom kernel is then compared
to the performance of CGAL's \lstinline{Exact_predicates_inexact_constructions_kernel},
which follows a similar paradigm of filtered, exact predicates.

All benchmarks were run on a GNU/Linux workstation with a
Intel Core i7-6700HQ CPU using the performance scaling governor, no
optional mitigations against CPU vulnerabilities such as Spectre or
Meltdown, and disabled turbo for consistency. All code was compiled
with GCC 11.1 and the flags ``-O3 -march=native''.
The installed versions of relevant libraries were CGAL 5.4, GMP 6.2.1, MPFR 4.1.0, and Boost 1.79.

\subsubsection{2D Delaunay Triangulation}
The 2D Delaunay Triangulation algorithm provided by the CGAL library
makes use of the 2D \predicate{orientation} and \predicate{incircle} predicates, which compute
the sign of the following expressions: 
\[
\begin{aligned}p_{\text{orientation\_2}} & =\left|\begin{array}{cc}
a_{x}-c_{x} & a_{y}-c_{y}\\
b_{x}-c_{x} & b_{y}-c_{y}
\end{array}\right|\\
p_{\text{incircle\_2}} & =\left|\begin{array}{ccc}
a_{x}-d_{x} & a_{y}-d_{y} & \left(a_{y}-d_{y}\right)^{2}+\left(a_{y}-d_{y}\right)^{2}\\
b_{x}-d_{x} & b_{y}-d_{y} & \left(b_{y}-d_{y}\right)^{2}+\left(b_{y}-d_{y}\right)^{2}\\
c_{x}-d_{x} & c_{y}-d_{y} & \left(c_{y}-d_{y}\right)^{2}+\left(c_{y}-d_{y}\right)^{2}
\end{array}\right|.
\end{aligned}
\]
2D Delaunay Triangulations were computed for two data sets of randomly generated points.
The coordinates were sampled either from a continuous uniform distribution (using CGAL's Random\_\allowbreak points\_\allowbreak in\_\allowbreak square\_2 generator) or from an equidistant grid (using CGAL's points\_\allowbreak on\_\allowbreak square\_\allowbreak grid\_2 generator) and shuffled with 1,000,000 points in each data set. For the continuous distribution,
we found a 4.2\% performance penalty
for the use of underflow guards, which can be explained by the slightly
more expensive error expressions. With or without underflow guards,
our implementation performed faster than the CGAL predicates, see (a) in Fig.~\ref{fig:benchmark-plot}. This is expected because all calls can be decided on a code path with a
single, well-predictable branch.

\begin{figure}
\begin{center}
\includegraphics[width=0.7\columnwidth]{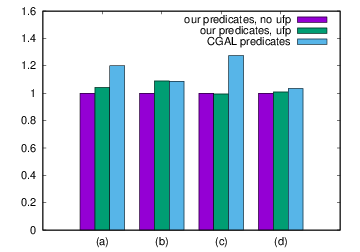}
\end{center}
\caption{This chart shows the relative runtime for the construction of a Delaunay triangulation of 1,000,000 points with coordinates sampled from (a) a continuous uniform distribution and (b) an equidistant grid, as well as for CGAL mesh (c) polygon processing and (d) refinement benchmarks. For each benchmark, our filters with and without underflow protection are compared to the predicates implemented in CGAL.
\label{fig:benchmark-plot}}
\end{figure}

For the points sampled from the equidistant grid, the triangulations were, in general, much slower, which
is also expected because the input is designed to be degenerate and trigger
edge cases. Our predicates with underflow
protection and the predicates in CGAL show very similar performance (roughly
0.2\% difference), while our filter without underflow protection is significantly
faster, see (b) in Fig.~\ref{fig:benchmark-plot}.

By construction, our semi-static filter with underflow protection
fails for all cases in which the true sign is zero, most of which
can be decided by the zero-filter, though. Table~\ref{tab:grid-delaunay} shows the number of filter failures
in the first filters for each predicate.
\begin{table}
\begin{center}
\begin{tabular}{ccccc}
\toprule
filter failures in the first stage & no UFP & UFP & CGAL & total calls\tabularnewline
\midrule
2D \predicate{orientation} & 49,375 & 624,400 & 49,641 & 4,121,216\tabularnewline
2D \predicate{incircle} & 1,112,461 & 1,490,010 & 1,641,255 & 8,455,667\tabularnewline
\bottomrule
\end{tabular}
\end{center}
\caption{Number of filter failures for the 2D \predicate{orientation} and 2D \predicate{incircle} predicate with various semi-static filters when constructing the Delaunay triangulation of 1,000,000 points sampled from an equidistant grid.}
\label{tab:grid-delaunay}
\end{table}
For a graphical comparison of the precision of 2D orientation filters, see Fig.~\ref{fig:precision}.

\begin{figure}[h]

\begin{subfigure}{0.5\textwidth}
\includegraphics[width=0.9\linewidth, height=3cm]{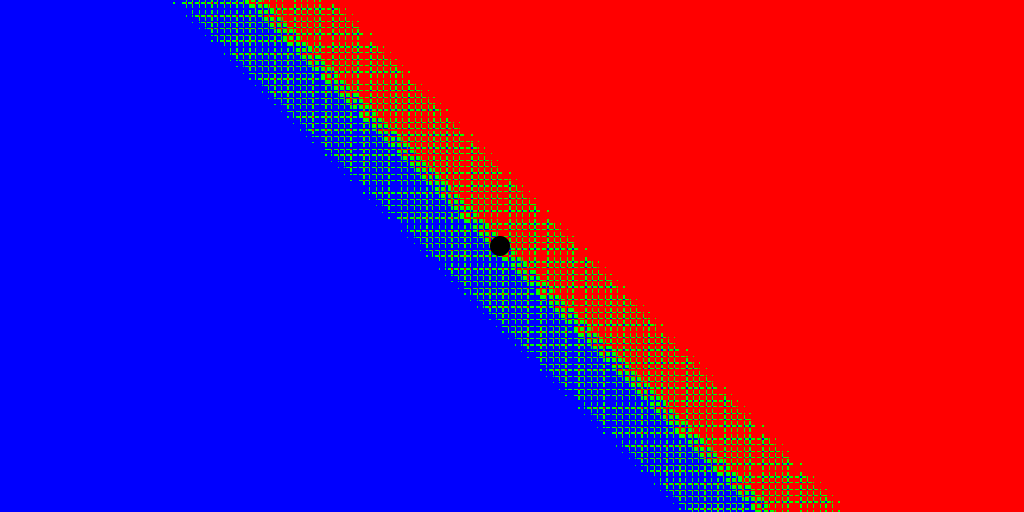} 
\caption{Naive predicate}
\end{subfigure}
\begin{subfigure}{0.5\textwidth}
\includegraphics[width=0.9\linewidth, height=3cm]{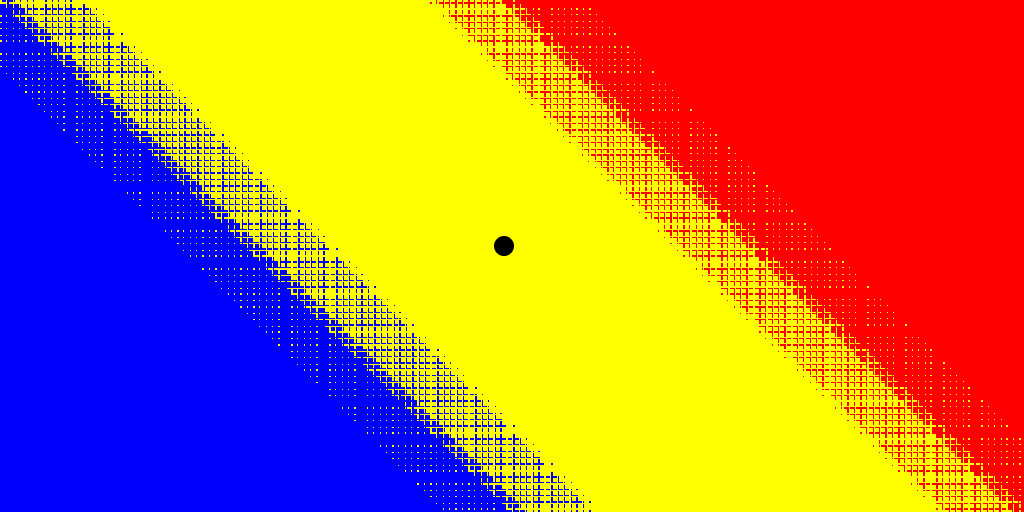}
\caption{Our filter}
\end{subfigure}
\begin{subfigure}{0.5\textwidth}
\includegraphics[width=0.9\linewidth, height=3cm]{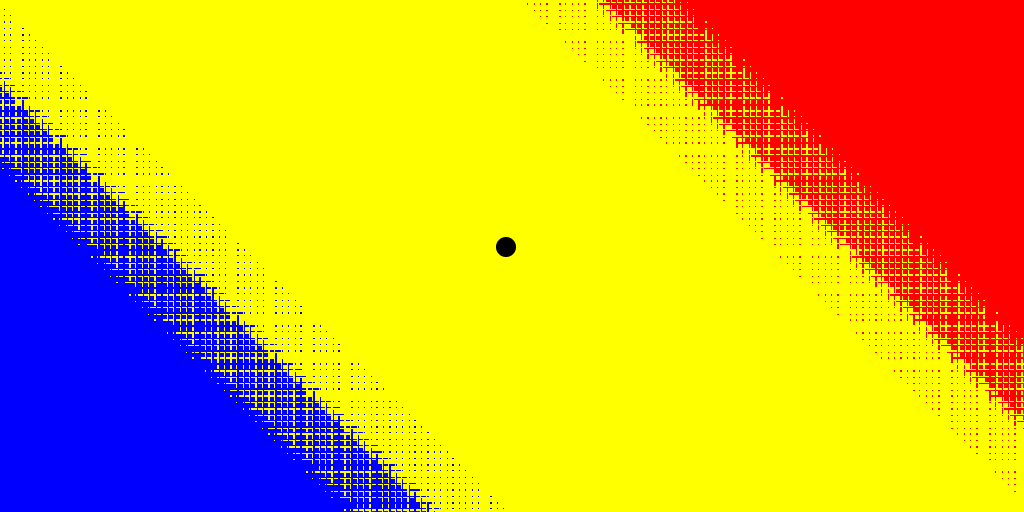} 
\caption{FPG filter}
\end{subfigure}
\begin{subfigure}{0.5\textwidth}
\includegraphics[width=0.9\linewidth, height=3cm]{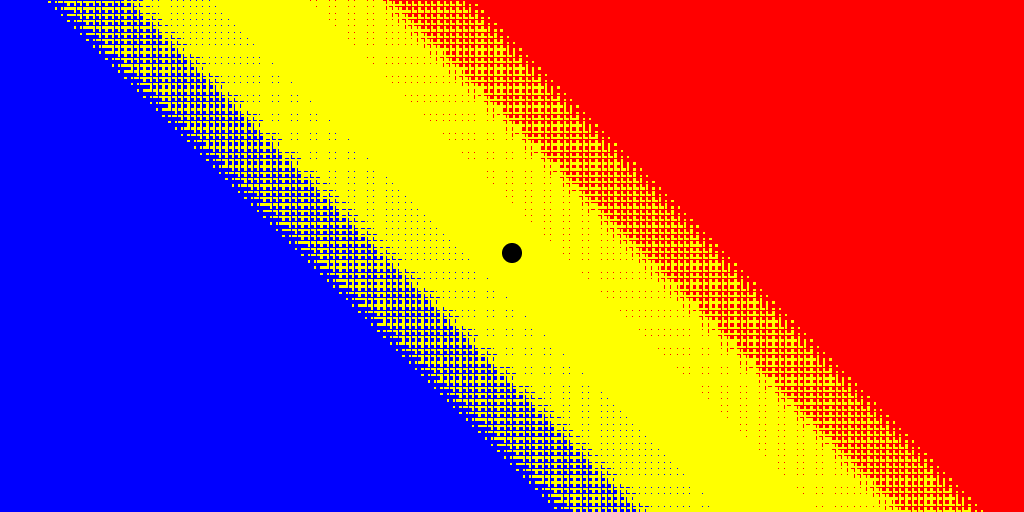}
\caption{Interval filter}
\end{subfigure}

\caption{This figure shows the result of calls to the non-robust 2D orientation predicate and to three 2D orientation filters respectively for the points (20.1, 20.1), (18.9, 18.9) and a small neighbourhood of the point (3.5, 3.5), such that neighbouring pixels represent points with neighbouring floating-point coordinates. The point (3.5, 3.5) is marked with a black circle. The dimensions of the neighbourhood are roughly $6\cdot10^{-13}$ in width and $3\cdot10^{-13}$ in height. The colours represent left side (red), collinear (green), right side (blue) and uncertain (yellow). The pattern of green points in (a) shows that the naive predicate produces many incorrect results. Our filter (b) is more precise than FPG (c) but less precise than the significantly slower interval filter. Ozaki's filter produces the exact same image as our filter.}
\label{fig:precision}
\end{figure}

\subsubsection{3D Polygon Mesh Processing}

The next benchmark was taken from the Polygon Mesh processing benchmark
in CGAL. For this benchmark, first, a 3D mesh is taken, and a polyhedral
envelope with a distance $\delta$ is taken around it. The polyhedral
envelope is an approximation of the Minkowski sum of the mesh with
a sphere, also known as a buffer. Then, three points are repeatedly
chosen in a loop, and if they form a non-degenerate triangle, it is
tested whether that triangle is contained in the polyhedral envelope
or not. As input, we use the file pig.off, which is provided as a
sample in the CGAL tree, and for $\delta$ we chose 0.1. This is described
in more detail in~\cite{cgal:lty-pmp-22a}.

The algorithm makes use of the 3D \predicate{orientation} predicate defined
as the sign of 
\[
\begin{aligned}p_{\text{orientation\_3}} & =\left|\begin{array}{ccc}
a_{x}-d_{x} & a_{y}-d_{y} & a_{z}-d_{z}\\
b_{x}-d_{x} & b_{y}-d_{y} & b_{z}-d_{z}\\
c_{x}-d_{x} & c_{y}-d_{y} & c_{z}-d_{z}
\end{array}\right|\end{aligned}
.
\]
 No filter failures were recorded for either implementation, and no
performance penalty was measured for the underflow protection. The predicates
provided by CGAL caused an additional runtime of around 28\% compared
to our implementation, see (c) in Fig.~\ref{fig:benchmark-plot}.

\subsubsection{3D Mesh Refinement}

As the last benchmark, we measure the runtime of 3D mesh refinement
with CGAL. The algorithm and its parameters are explained in~\cite{cgal:rty-m3-22a}. 
The predicates used in this benchmark are the
3D \predicate{orientation} predicate and the \predicate{power side of oriented power sphere}
predicate, which is defined as the sign of the following expression
\[
\begin{aligned}p & =\left|\begin{array}{cccc}
a_{x}-e_{x} & a_{y}-e_{y} & a_{z}-e_{z} & \left(a_{x}-e_{x}\right)^{2}+\left(a_{y}-e_{y}\right)^{2}+\left(a_{z}-e_{z}\right)^{2}+\left(e_{w}-a_{w}\right)\\
b_{x}-e_{x} & b_{y}-e_{y} & b_{z}-e_{z} & \left(b_{x}-e_{x}\right)^{2}+\left(b_{y}-e_{y}\right)^{2}+\left(b_{z}-e_{z}\right)^{2}+\left(e_{w}-b_{w}\right)\\
c_{x}-e_{x} & c_{y}-e_{y} & c_{z}-e_{z} & \left(c_{x}-e_{x}\right)^{2}+\left(c_{y}-e_{y}\right)^{2}+\left(c_{z}-e_{z}\right)^{2}+\left(e_{w}-c_{w}\right)\\
d_{x}-e_{x} & d_{y}-e_{y} & d_{z}-e_{z} & \left(d_{x}-e_{z}\right)^{2}+\left(d_{x}-e_{z}\right)^{2}+\left(d_{x}-e_{z}\right)^{2}+\left(e_{w}-d_{w}\right)
\end{array}\right|\end{aligned}
,
\]
which has with $d=5$ the highest degree of all predicates used in
our benchmarks and is based on a non-homogeneous polynomial. As input file,
we used elephant.off, which is provided as a sample in the CGAL source
tree, with a face approximation error of 0.0068, a max facet sign
of 0.003 and a maximum tetrahedron size of 0.006.

The underflow guard came with a slight performance penalty of around
1\%, and the CGAL predicates were about 3.4\% slower, see (d) in Fig.~\ref{fig:benchmark-plot}. There was a non-zero but negligible number of filter failures of around 0.1\% for each
of the predicates.

\section*{Conclusion}
We have presented a recursive scheme for the derivation of (semi-)static filters for geometric predicates. The approach is branch-efficient, sufficiently general to handle rounding errors, overflow and underflow and can be applied to arbitrary polynomials.

Our C++-metaprogramming-based implementation is user-friendly in so far as it requires no code generation tools, additional annotations for variables or manual tuning. This is achieved without the additional runtime overhead of previous C++-wrapper-based implementations, and our measurements show that our approach is competitive with and even outperforms the state-of-the-art in some cases.

Future work could include generalisations toward non-polynomial predicates and robust predicates on implicit points that occur as results or interim results of geometric constructions and may not be explicitly representable with floating-point coordinates. The implementation may also be extended in the future to include further filtering stages to improve the performance for common cases of degenerate inputs.

\bibliographystyle{plain}
\bibliography{references}

\end{document}